\providecommand{\keywords}[1]{\textbf{\textit{Keywords: }} #1}
\newcommand{\ba}{\begin{eqnarray}}
\newcommand{\ea}{\end{eqnarray}}
    \newtheorem{theorem}{Theorem}[section]
    \newtheorem{lemma}[theorem]{Lemma}
    \newtheorem{proposition}[theorem]{Proposition}
    \theoremstyle{definition}
    \newtheorem{definition}[theorem]{Definition}
\theoremstyle{remark}
\newtheorem{remark}[theorem]{Remark}
\newtheorem{theorem*}{Theorem}
\newtheorem{lemma*}[theorem]{Lemma}
\newtheorem{corollary*}[theorem]{Corollary}
\newtheorem{proposition*}[theorem]{Proposition}
\newtheorem{problem*}[theorem]{Problem}
\newtheorem{conjecture*}[theorem]{Conjecture}
\newenvironment{example}
  {\pushQED{\qed}\examplex}
  {\popQED\endexamplex}
\newtheorem{result}[theorem]{Result}
\begin{document}
 
\title{On Completely Mixed Stochastic Games}
\author{Purba Das$^1$         \and
       T Parthasarathy$^2$ \and G Ravindran$^3$
}
\date{%
    $^1$Mathematical Institute, University of Oxford.\\%
    $^2$Chennai Mathematical Institute, India.\\
    $^3$Indian Statistical Institute, Chennai Centre, India.\\[2ex]%
    \today
}


\maketitle
\begin{abstract}
We have considered both zero-sum and non-zerosum two-person undiscounted stochastic game with finite state space and finite number of pure actions for both players. For large number of results, the transition probability of the undiscounted stochastic game is controlled by one player and all optimal strategies of the game are completely mixed. Under these assumptions, we concluded all the $\beta$-discounted zero-sum stochastic game with $\beta$ sufficiently close to $1$ and same payoff matrices are also completely mixed games. A counterexample is provided to show that the converse does not hold. Necessary conditions are provided under which the individual matrix games are completely mixed.
Under nonsingularity condition for the non-zerosum single player controlled stochastic game, we have shown that  the undiscounted game is completely mixed implies unique Nash equilibrium. Equalizer rules for completely mixed Nash equilibrium for both player controlled games are also provided.
\end{abstract}
\keywords{Undiscounted Stochastic Game, $\beta$-discounted Stochastic Game, Limiting Average Payoff, $\beta$-discounted Payoff, Completely Mixed Game, Undiscounted and $\beta$-discounted Value, Single Player Controlled Transition, Zero-sum Stochastic Game, Non-zerosum Stochastic Games, Matrix game, Nash Equilibrium.}

\section{Introduction}
Stochastic game (Zero-sum) was first introduced by Shaply (\cite{shaply53}) where he showed the existence of the value of a stochastic game and existence of stationary optimal strategies for zero-sum, $\beta$-discounted stochastic games . Zero-sum undiscounted stochastic game was first introduced by Gillet (\cite{gillet}). He shows that, unlike $\beta$-discounted stochastic game, undiscounted game may not posses stationary optimal strategies. Blackwell  and  Ferguson (\cite{black1968}) studied Gillette's example and shows that though Gillette's game does not possess an stationary optimal for both the players, it has an $\epsilon$-optimal behavioral strategy for player-1 and a stationary optimal strategy for player-2.  In the same year  Blackwell  and  Ferguson (\cite{black1968}) provide an example of undiscounted stochastic game which does not possess stationary optimal strategy for one of the players. Later Filar (\cite{filr1985}) introduce the completely mixed stochastic game (previously, completely mixed matrix game was defined by Kaplansky (\cite{kapl1945}). Kaplansky's paper characterize completely mixed matrix games followed by Shapley's 
construction of `Shapley matrix' (\cite{shaply53}) builds a connection between the completely mixed matrix games and the completely mixed $\beta$-discounted stochastic games. See, for example, the recent paper \cite{sujatha2016}. Filar \cite{filr1985} extend some of the results of completely mixed matrix game to the completely mixed undiscounted stochastic game under the assumption of single player controlled transition probabilities.
\par
Nonzero-sum versions of Shapley's stochastic games \cite{shaply53} with the discounted payoff criterion were first studied by Fink (\cite{fink64}) and Takahashi (\cite{takahashi64}). The theory of nonzero-sum stochastic games with the average payoffs per unit time for the players was first introduced by Rogers (\cite{Rogers1969}) and later it was formally concidered by Sobel (\cite{Sobel1971}). They considered finite state spaces only and assumed that
the transition probability matrices induced by any stationary strategies
of the players are unichain. Parthasarathy and Raghavan (\cite{tp1981}) first considered the single player controlled stochastic game. They (\cite{tp1981}) have shown the existence of stationary strategy for non-zerosum game under the assumption of one player controlled transition. A constructive proof
of their results is given in Nowak and Raghavan (\cite{nowak93}).
\par In this paper, we have shown that under the assumption of single player controlled transition, if the zero-sum undiscounted stochastic game is completely mixed then the $\beta$-discounted stochastic games are completely mixed for all $\beta$ sufficiently close to 1. Counterexample for the converse of the previous result is also provided. Also, we have provided a necessary condition under which the individual matrix games are completely mixed. For the non-zerosum stochastic game, we have shown that under nominal assumption on the individual playoff matrices the undiscounted completely mixed stochastic game process an unique Nash equilibrium. We also have shown that for both undiscounted and $\beta$-discounted games if some Nash equilibrium (NE) of the stochastic game is completely mixed then NE of the game follows an equalizer rule. Some examples are provided to support our results. The paper ends with some open problems which are left unanswered.

\section{Definition and preliminaries}
For a finite state and finite action two players stochastic game, the game is played within two players (known as player-1 and player-2) and played in every day. Each day, the game will be in a specific state $s$ and both players will play a matrix game $R(s)$ (specified with the state $s$) and will get some reward, (which will add up to zero for the zero-sum stochastic game) which can be positive, negative or zero. The game will move to a new state in the next day and continue indefinitely.
\par Throughout the rest of the paper unless mentioned otherwise we will be assuming game means two player controlled stochastic game.
\begin{definition}[Two-person finite stochastic game]
A two person finite stochastic game can think of a $6$ tuple $G=(S,A_1,A_2,r^1,r^2,q)$. The game is played between player-1 and player-2. $S$ is the set of all states in the stochastic game. as state space is finite wlog $S=\{s_1,s_2,\cdots,s_K\}$. $A_1$ and $A_2$ be the set of all pure actions available for player-1 and player-2 respectively. That is, in state $s\in S$ player $i$ for $i=\{1,2\}$ has pure actions (finite) 
$A_i(s)=\{1,2,\cdots ,m_i\}$. $r^1$ and $r^2$ are the reward functions for respective players and $q$ is the transition 
probability function. If in state $s\in S$, player-1 and player-2 choose pure actions $i$ and $j$ respectively, then the payoff for player-1 on that specific day is $r^1(s,i,j)$ and for player-2is $r^2(s,i,j)$. With probability $q(s'|s,i,j)$ the game moves to state $s'$ on the next day.
\end{definition}
For the zero-sum stochastic game $r^2= - r^1 $ (for convenience assume $r^1 = r$). So the total gain of player-1 is preciously the total loss of player-2 on a specific day and vice-verse. Thus called zero-sum game.

The payoff matrices for player-1 and player-2 respectively in state $s\in S$ is denoted as follows.
$$R^1(s)=(r^1(s, i, j))_{m_1 \times m_2}.$$
$$R^2(s)=(r^2(s, i, j))_{m_1 \times m_2}.$$

In general, the strategy for a player can depend on the whole history up to today, but we will only be considering those strategies which do not depend on the previous history (markov strategy), that is, if the game is in state $s_0 \in S$ today then for different past history of reaching $s_0$ both the player's strategy will be exactly same. This history independent strategy is known as stationary strategy.
\begin{definition}[Stationary strategy]
Denote $P_{A_k}$, for $k\in\{1,2\}$ be the set of all probability distribution of player $k$'s action space $A_k$. Then stationary strategy of player $k$ is a function from state space $S$ to the space $P_{A_k}$.
\end{definition}
\begin{definition}[Undiscounted payoffs for stochastic game]
Denote, $r_i^{(n)}(f,g,s_0)$ for $i \in \{1,2\}$ be the expected immediate reward for player $i$ at $n^{th}$ day if the game starts in state $s_0$ and, player-1 and player-2 choose stationary strategies $f$ and $g$ respectively.
\par If player-1 and player-2 play the strategy $f$ and $g$, respectively then the undiscounted payoff player-1 and player-2 get in state $s_0\in S$ respectively is $\Phi^1(f,g)(s_0)$ and $\Phi^2(f,g)(s_0)$. Where,
 $$\Phi^1(f,g)(s_0) =\liminf_{N \uparrow \infty} \Bigg[ \frac {1}{N+1}\sum_{n=0}^{N} r_1^{(n)}( f, g,s_0)\Bigg] \; \text{and,}$$
 $$\Phi^2(f,g)(s_0) =\liminf_{N \uparrow \infty} \Bigg[ \frac {1}{N+1}\sum_{n=0}^{N} r_2^{(n)}(f, g,s_0)\Bigg].$$
\end{definition}
The `undiscounted payoff' is also known as \textbf{limiting average payoff}.
\\For the zero-sum stochastic game $\Phi^2(f,g)(s_0)=-\Phi^1(f,g)(s_0)$ (For convenience denote $\Phi^1= \Phi$).
\begin{definition}[$\beta$-discounted payoffs for general-sum stochastic game]
If player-1 and player-2 play strategies $f$ and $g$ respectively then for discount factor $\beta \in [0,1)$, the $\beta$-discounted payoff in state $s_0 \in S$ to player-1 and player-2 respectively are $I^1_\beta(f,g) (s_0)$ and $I^2_\beta(f,g) (s_0)$. Where,
$$I^1_\beta(f,g) (s_0) = \sum_{n=0}^{\infty}\beta^n  r_1^{(n)}(f, g, s_0) \; \text{and,}$$
$$I^2_\beta(f,g) (s_0) = \sum_{n=0}^{\infty}\beta^n  r_2^{(n)}(f, g, s_0).$$
Where, $r_i^{(n)}(f,g,s_0)$ for $i \in \{1,2\}$ is the expected immediate reward of 
player $i$ on $n^{th}$ day in 
state $s_0$, and player-1 and player-2 play the strategies $f$ and $g$, respectively.
\par For the zero-sum stochastic game we have $I^2_\beta(f,g) (s_0)$= $-I^1_\beta(f,g) (s_0)$ (For convenience assume $I^1_\beta= I_\beta$).
\end{definition}
\begin{definition}[Optimal strategy and value of the zero-sum stochastic game]
A pair of stationary strategies $(f^0,g^0)$ is said to be an optimal strategy in the zero-sum undiscounted stochastic game if,  for all $f\in \mathbb{P}_1$ and $g\in \mathbb{P}_2$,
$$ \Phi (f, g^0) \leqslant \Phi (f^0, g^0) \leqslant \Phi (f^0, g) \quad \text{coordinatewise.}$$
where, $\Phi(f,g) =(\Phi(f,g)(s_1), \Phi(f,g)(s_2),\cdots ,\Phi(f,g)(s_K))^{T}$. The value of the undiscounted stochastic game in state $s\in S$ is denoted as $v(s)=\Phi(f^0,g^0)(s)$.

A pair of stationary strategies $(f^0, g^0)$ is said to be an optimal stationary strategy for the zero-sum $\beta$-discounted stochastic game if, for all $f\in \mathbb{P}_{A_1}$ and for all $g\in \mathbb{P}_{A_2}$,
$$ I_\beta (f, g^0) \leqslant I_\beta (f^0, g^0) \leqslant I_\beta (f^0, g) \quad \text{coordinatewise}. $$
where, $I_{\beta}(f,g) =(I_{\beta}(f,g)(s_1), I_{\beta}(f,g)(s_2),\cdots ,I_{\beta}(f,g)(s_K))^{T}$.
The value of the $\beta$- discounted stochastic game in state $s\in S$ is denoted as $v_{\beta}(s)=I_{\beta}(f^0,g^0)(s)$.
\end{definition}
Call $f^0(g^0)$ optimal for player-1 (2) in the $\beta$-discounted game if $I_{\beta}(f^0,g)(s)\geq \min_g \max_f I_{\beta}(f,g)$ ($I_{\beta}(f,g^0)(s) \geq  \max_f \min_g I_{\beta}(f,g)(s)$) for all $g$ (for all $f$) and $s\in S$. Analogous definition holds for undiscounted stochastic game.\\
The value of a zero-sum stochastic game (both discounted and undiscounted) is always unique, whereas the optimal strategy of a stochastic game may or may not be unique.

\begin{definition}[Nash Equilibrium]
A pair of stationary strategy $(f^0,g^0)$ for player-1 and player-2 respectively is said to be Nash Equilibrium (NE) in the non-zerosum undiscounted stochastic game if,
\begin{center}
$\Phi^1(f^0,g^0) \geq \Phi^1(f,g^0)$ coordinate-wise, for all $f\in P_{A_1}$ and,\\ 
$\Phi^2(f^0,g^0) \geq \Phi^2(f^0,g)$ coordinate-wise, for all $g\in P_{A_2}.$ 
\end{center}
where, $\Phi^k(f,g) =(\Phi^k(f,g)(s_1), \Phi^k(f,g)(s_2),\cdots ,\Phi^k(f,g)(s_K))^{T}$ for $k \in \{1,2\}$ (Assuming both player-1 and player-2 wants to maximize their expected payoffs).
\\The value of the game associated with the NE $(f^0,g^0)$ in state $s\in S$ is given by $v^k_{f^0,g^0}(s)=\Phi^k(f^0,g^0)(s)$  for $k \in \{1,2\}$. For non-zerosum undiscounted stochastic games the value of the game is not unique. It changes with the NE.

A pair of strategies $(f^0,g^0)$ for player-1 and player-1 respectively is said to be a Nash Equilibrium (NE) in the $\beta$-discounted stochastic game if 
\begin{center}
$I^1_{\beta}(f^0,g^0) \geq I^1_{\beta}(f,g^0)$ coordinate-wise, for all $f\in P_{A_1}$ and,\\ 
$I^2_{\beta}(f^0,g^0) \geq I^2_{\beta}(f^0,g)$ coordinate-wise, for all $g\in P_{A_2}.$ 
\end{center}
where, $I^k_{\beta}(f,g) =(I^k_{\beta}(f,g)(s_1), I^k_{\beta}(f,g)(s_2),\cdots ,I^k_{\beta}(f,g)(s_K))^{T}$ for $k \in \{1,2\}$ (Assuming both player-1 and player-2 wants to maximize their expected payoff).
\\The value of the $\beta$-discounted game associated with the NE $(f^0,g^0)$ in state $s\in S$ is given by
$v^k_{\beta,\;f^0,g^0}(s)=
I_{\beta}^k(f^0,g^0)(s)$  for $k \in \{1,2\}$. For non-zerosum $\beta$-discounted stochastic games the value of the game is not unique. It changes with the NE.
\end{definition}

\begin{definition}[Single player controlled stochastic game (\cite{tp1981})] A stochastic game is said to be a single player
controlled stochastic game if the transition probability is controlled by only one player. For a player-2 controlled stochastic game, we have, $q(s'|s,i,j)=q(s'|s,j)$ for all $s,s'\in S$, pure action $i$ of player-1 and $j$ of player-2.
\end{definition}
Throughout the rest of the paper, unless mentioned otherwise we will be considering player-2 controlled stochastic game. An optimal strategy is said to be completely mixed if each coordinate of the strategy is strictly positive, that is each pure strategy is player with strictly positive probability.

\begin{definition}[Completely mixed stochastic game (\cite{filr1985})] A stochastic game is said to be completely mixed if every optimal strategy (Nash equilibrium) for both the players are completely mixed. That is for both player-1 and player-2 in each state $s \in S$ all the pure actions $i$ and $j$ for player-1 and player-2 respectively are played with strictly positive probabilities.
\end{definition}
The transition probability matrix $Q(f,g)$ for some stationary strategy $f$ of player-1 and $g$ 
of player-2, is defined as 
$$Q(f,g)=(q(s'|s,f,g))_{K\times K}.$$
Where, $q(s'|s,f,g)= \sum_{i=1}^{m_1} \sum_{j=1}^{m_2} f_i(s)q(s'|s,i,j)g_j(s).$
Under the assumption of player-2 controlled stochastic game, $Q(f,g)$ becomes $Q(g)=$ $(q(s'|s,g))_{K\times K}.$
\\For a stationary strategy pair (or Nash equilibrium)
$(f,g)$ the reward vector is defined as $$r^1(f,g)=(r^1(f,g,s_1),\cdots,r^1(f,g,s_K))^t$$ $$r^2(f,g)=(r^2(f,g,s_1),\cdots,r^2(f,g,s_k))^t$$ with,
$r^k(f,g,s)=f(s)^tR^k(s)g(s)=  \sum_{i=1}^{m_1} \sum_{j=1}^{m_2} f_i(s)r^k(i,j,s)g_j(s)$  for $k \in \{1,2\}$.
\par Now the discounted and undiscounted payoffs for the stationary strategy (or Nash equilibrium) $(f,g)$ for player-1 and player-2 respectively in state $s \in S$ can be written as-
\begin{center}
$I^k_{\beta}(f,g)(s)=\{[I-\beta Q(f,g)]^{-1} r^k(f,g)\}_s \quad$ and\\
$\Phi^k(f,g)(s) =\{Q^*(f,g)r^k(f,g)\}_s $
\end{center}
Where, $k \in \{1,2\}$, $Q^0(f,g) = I$ and the markov matrix $Q^*(f,g)$ is as follows.
$$Q^*(f,g)= \lim_{n \rightarrow \infty} \frac{1}{N+1}\sum_{n=0} ^N Q^n(f,g).$$ And the notation $\{.\}_s$ represents the $s^{th}$ coordinate of the respective vector.
\begin{definition}[Uniform Discount Optimal]
A strategy $g^0$ for player-2 is said to be an uniform discount optimal if $g^0$ is optimal for player-2 in the undiscounted stochastic game $\Gamma$ as well as in the $\beta$-discounted game $\Gamma_{\beta}$ for all $\beta$ close to $1$. Similarly we can extend the definition for player-1 also.
\end{definition}

\begin{definition}[Uniform Discount Equilibrium]
A Nash equilibrium pair $(f^0,g^0)$ for player-1 and player-2 respectively is said to be an uniform discount equilibrium if $(f^0,g^0)$ is a Nash equilibrium in the undiscounted stochastic game $\Gamma$ and $(f^{\beta},g^0)$ is a Nash equilibrium in the $\beta$-discounted game $\Gamma_{\beta}$ for all $\beta$ close to $1$.
Similarly we can define for player-1 also.
\end{definition}

\begin{definition}[Auxiliary Game]
For a $\beta$-discounted zero-sum stochastic game the auxiliary game in state $s \in S$ is defined by the matrix $R_\beta(s)$. The $(i,j)^{th}$ entry of the matrix $R_\beta(s)$ is given by
$$r(i,j,s) + \beta \sum_{s' \in S } v_\beta(s') q(s'|s,i,j)$$
where, $v_\beta(s')$ is the value of the $\beta$-discounted stochastic game at state $s'\in S$. $R_{\beta}(s)$ is called the auxiliary game at state $s$ (or starting at state s). The matrix $R_{\beta}(s)$ is known as, \textbf{Shapley Matrix} (\cite{shaply53}).
\end{definition}
The following results will be used to proof our results.
\begin{result}[Theorem 1 page 475, \cite{kapl1945}]\label{result1}
Consider a two person zero sum matrix game
with payoff matrix $M \in \mathbb{R}^{m\times n}$. Suppose player-2 has a completely mixed 
optimal strategy $y'$ then for any optimal strategy strategy $x'$ for player-1, we have $\sum_{i=1}^{m}m_{ij}x_i' \equiv v $ for all $j\in \{1,2, \cdots ,n\}$, where $v$ is the value of the matrix game and $m_{ij}$ is the $(i,j)^{th}$ entry of the matrix $M$.
\end{result} 

\begin{result}[\cite{tp1981}]\label{result2}
For player-2 controlled stochastic game (both zero-sum and non-zerosum) if $(f^{\beta},g^{\beta})$ is a pair of Nash equilibrium in the $\beta$-discounted stochastic game and if $f^{\beta}\rightarrow f^0$ component wise then, for $k=\{1,2\}$,
$$\lim_{\beta \uparrow 1}(1-\beta)I^k_{\beta}(f^{\beta},g) \equiv \Phi^k(f^0,g)$$
where, $g$ is a stationary strategy for player-2. \cite{tp1981} also provides,
\begin{center}
$v^k(s)=\lim_{\beta \uparrow 1}(1-\beta)v_{\beta}(s)$ 
\quad for all $s \in S$ and $k\in \{1,2\}$.
\end{center}
\end{result}
\begin{result}[Shapley Matrix \cite{shaply53}]\label{result3}
A two-player zero-sum $\beta$-discounted stochastic game $\Gamma_\beta = (S, A_1, A_2$ $,r, q, \beta)$ has an optimal value vector $v_\beta$ which is obtained by $$v_\beta(s) = val(R_\beta(s)),$$
where $R_\beta(s)$ is the shapley matrix in state s.
\\ For each state $s \in S$, $(f^\beta(s), g^\beta(s))$ is a pair of optimal strategies of the matrix game $R_\beta(s)$ , if and only if $(f^\beta, g^\beta)$ is a pair of optimal strategies for the $\beta$-discounted stochastic game $\Gamma_{\beta}$, where\\ $f^\beta = (f^\beta(s_1),f^\beta(s_2), \cdots , f^\beta(s_K))$ and $g^\beta = (g^\beta(s_1), g^\beta(s_2), \cdots , g^\beta(s_K))$ and $K$ is the number of states.
\end{result}

\section{Zero-sum Games}\label{zerosum}
Unless mentioned otherwise for Section \ref{zerosum}, player-1 is the maximizing player and player-2 is the minimizing player. The following Lemma is required to proof the main theorem.
\begin{lemma}\label{lemma1}
Assume player-2 controlled transition. Suppose, there exists $\beta_0 \in [0,1)$ and a completely mixed stationary strategy $g^0$ such that $g^0$ is optimal for player-2 (Minimizer) in \textbf{every} $\beta$-discounted stochastic game for all $\beta > \beta_0$.
Let, $\beta_n \in [\beta_0,1)$ be such that $\beta_n \uparrow 1$. Let $\{f_n\}$ be optimal for player-1 (Maximizer) for $\beta_n$-discounted stochastic game. Suppose $f_n \rightarrow f_0$ coordinate-wise, that is $f_n(s) \rightarrow f_0(s)$ for each state $s\in S$ then $f_0$ is optimal for player-1 in the undiscounted stochastic game.
\end{lemma}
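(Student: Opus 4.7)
The plan is to promote the saddle-point structure from each $\beta_n$-discounted game to the undiscounted game by an Abel/Tauber-style passage to the limit, with all heavy lifting done by Result 2. Write $v_{\beta_n}$ for the vector of $\beta_n$-discounted values. Since $g^0$ is optimal for the minimizer in the $\beta_n$-game and $f_n$ is optimal for the maximizer, $(f_n, g^0)$ is a saddle point of the $\beta_n$-game, and in particular
\[
I_{\beta_n}(f_n, g^0) \;=\; v_{\beta_n}.
\]

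First I would use the two halves of Result 2 simultaneously. Because $f_n \to f_0$ coordinate-wise and each $f_n$ is optimal in the $\beta_n$-discounted game, Result 2 gives $(1-\beta_n)\, I_{\beta_n}(f_n, g) \to \Phi(f_0, g)$ for every stationary $g$; applied at $g = g^0$ this yields $(1-\beta_n)\, v_{\beta_n} \to \Phi(f_0, g^0)$. The other half of Result 2 gives $(1-\beta_n)\, v_{\beta_n} \to v$ directly. Comparing these two limits of the same sequence forces
\[
\Phi(f_0, g^0) \;=\; v.
\]

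Next I would derive the two saddle-point inequalities for $(f_0, g^0)$ in the undiscounted game by taking limits of the corresponding $\beta_n$-discounted inequalities. For any stationary $g$, the minimizer-optimality of $g^0$ at level $\beta_n$ yields $I_{\beta_n}(f_n, g^0) \le I_{\beta_n}(f_n, g)$; multiplying by $(1-\beta_n)$ and invoking Result 2 on both sides (left side $\to v$, right side $\to \Phi(f_0,g)$) gives $v \le \Phi(f_0, g)$ coordinate-wise. Symmetrically, for any stationary $f$, the maximizer-optimality of $f_n$ at level $\beta_n$ against $g^0$ gives $I_{\beta_n}(f, g^0) \le I_{\beta_n}(f_n, g^0)$; passing to the limit using the Abel limit $(1-\beta_n) I_{\beta_n}(f, g^0) \to \Phi(f,g^0)$ (which is the fixed-$f$ case of Result 2, coming from $(1-\beta)[I-\beta Q(g^0)]^{-1} \to Q^*(g^0)$) yields $\Phi(f, g^0) \le v$. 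Combining with $\Phi(f_0, g^0) = v$, the pair $(f_0, g^0)$ is a saddle point of the undiscounted game, so $f_0$ is optimal for player 1.

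The only delicate step is ensuring the two Abel/Tauber limits are legitimate: the one with a varying $f_n$ is exactly the hypothesis of Result 2 (and crucially uses the player-2-controlled assumption, since otherwise $Q$ would also depend on $f_n$), and the one with fixed $f$ is the simpler stationary case of the same Tauberian fact. Once these two limit passages are in place, the argument is a direct limit of the $\beta_n$-discounted saddle-point inequalities; I do not expect any genuine obstacle beyond bookkeeping.
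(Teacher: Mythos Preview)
Your proposal is correct, but the route differs substantially from the paper's. The paper exploits the \emph{completely mixed} hypothesis on $g^0$ via Kaplansky's equalizer property (Result~1): since $g^0$ is completely mixed and optimal in the $\beta_n$-game, every optimal $f_n$ for player~1 equalizes across all of player~2's columns, so $I_{\beta_n}(f_n,g)\equiv v_{\beta_n}$ for \emph{every} stationary $g$. The paper then rewrites this as $r(f_n,g)=[I-\beta_n Q(g)]\,v_{\beta_n}$, uses continuity of $r(\cdot,g)$ in $f$ to replace $f_n$ by $f_0$ up to $\epsilon$, inverts by $[I-\beta_n Q(g)]^{-1}$ (using $[I-\beta_n Q(g)]^{-1}e=(1-\beta_n)^{-1}e$), and passes to the Abel limit to obtain the two-sided conclusion $\Phi(f_0,g)=v$ for all $g$.

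By contrast, you never invoke Result~1 or the completely mixed hypothesis on $g^0$: you feed the saddle-point inequality $I_{\beta_n}(f_n,g)\ge v_{\beta_n}$ directly into Result~2 and obtain $\Phi(f_0,g)\ge v$, which already suffices for optimality of $f_0$. This is shorter and in fact proves a more general statement (the hypothesis that $g^0$ be completely mixed, indeed the existence of a common $g^0$ at all, is not used to reach the stated conclusion about $f_0$). What the paper's approach buys is the sharper equality $\Phi(f_0,g)=v$ for every $g$, and a proof that is more self-contained in that the passage from the varying $f_n$ to the fixed $f_0$ is done by hand through continuity of $r$ rather than hidden inside the black box of Result~2.
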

\begin{proof}
Under one player controlled transition probability, an undiscounted stochastic game has value restricted to stationary strategy (\cite{tp1981}).\\
Suppose $f_n$ is optimal for player-1 in the $\beta_n$-discounted stochastic game. From the assumption we have a completely mixed strategy $g_0$ for player-2 in the $\beta_n$-discounted stochastic game from Result \ref{result1} and Result \ref{result3}:
$$I_{\beta_n}(f_n,g) \equiv v_{\beta_n} \quad \text{and} \quad I_{\beta_n}(f_n,g)=(I_{\beta_n}(f_n,g)(s_1),\cdots,I_{\beta_n}(f_n,g)(s_K))^t,$$
where, $g$ is any stationary strategy of player-2, and
$$v_{\beta_n}=(v_{\beta_n}(s_1),\cdots,v_{\beta_n}(s_K))^t.$$
Therefore, we have 
$$[I-\beta_n Q(g)]^{-1}r(f_n,g)\equiv v_{\beta_n}.$$
Now $[I-\beta_n Q(g)]^{-1} =\sum_{k=0}^\infty\beta_n^k Q^k(g)$  is a non-negative matrix. Hence the expression of $r(f_n,g)$ is as follows.
$$ r(f_n,g) \equiv [I-\beta_n Q(g)] v_{\beta_n}. $$
Also, we have $f_n \rightarrow f_0$ point-wise and the reward function $r(.,.)$ is a continuous function on player-1's strategy. Hence, for any given $\epsilon > 0 $ there exists $N_0\in \mathbb{N}$ such that for all $n\geq N_0$:
$$[I-\beta_n Q(g)] v_{\beta_n} - \epsilon e \leq r(f_0,g) \leq [I-\beta_n Q(g)] v_{\beta_n}+ \epsilon e$$
coordinate-wise, where $e$ is a suitable length column vector with all entry as $1$. Therefore we have,
$$v_{\beta_n} - \epsilon [I-\beta_n Q(g)]^{-1} e \leq [I-\beta_n Q(g)]^{-1} r(f_0,g) \leq  v_{\beta_n}+ \epsilon [I-\beta_n Q(g)]^{-1} e.$$
As $(1-\beta_n)$ is always non-negative for all $\beta_n \in (0,1]$, we have.
$$(1-\beta_n)v_{\beta_n} - (1-\beta_n) \epsilon [I-\beta_n Q(g)]^{-1} e \leq (1-\beta_n) [I-\beta_n Q(g)]^{-1} r(f_0,g) $$$$\leq (1-\beta_n) v_{\beta_n}+ (1-\beta_n) \epsilon [I-\beta_n Q(g)]^{-1} e.$$
We have, $[I-\beta_n Q(g)]^{-1}e= [\sum_{k=0}^{\infty} \beta_n^k Q^k(g)]e = \sum_{k=0}^{\infty} \beta_n^k [Q^k(g)e] = \sum_{k=0}^{\infty} \beta_n^k e$ as $Q^k$ is a stochastic matrix for each $k$. Therefore the above inequality reduces to the following inequality.
$$(1-\beta_n)v_{\beta_n} -  \epsilon  e \leq (1-\beta_n) [I-\beta_n Q(g)]^{-1} r(f_0,g) \leq (1-\beta_n) v_{\beta_n}+ \epsilon e.$$
Now if we let $\beta_n \uparrow 1$, Using Result \ref{result2} we have the above inequality as follows.
$$v-\epsilon e \leq \Phi(f_0,g) \leq v+\epsilon e, $$ 
where $v=(v(s_1),\cdots,v(s_K))^t$ is the value of the undiscounted stochastic game. The argument is true for any $\epsilon > 0 $. Hence $f_0$ constructed above is optimal strategy for player-1 in the undiscounted stochastic game $\Gamma$. 
 \end{proof}
If an undiscounted single player stochastic game is completely mixed then we can conclude the $\beta$-discounted stochastic games are completely mixed for $\beta$ sufficient close to $1$.

\begin{theorem}\label{main.theorem}
Consider a finite, undiscounted, zero-sum, single player controlled completely mixed stochastic game $\Gamma$. Then there exists $\beta_0 \in [0,1)$ such that for all $ \beta > \beta_0 $, the $\beta$-discounted stochastic games $\Gamma_\beta$ obtained from the same payoff matrices are completely mixed. 
\end{theorem}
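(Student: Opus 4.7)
The plan is to argue by contradiction using compactness of the strategy space together with Result 2. Assume the conclusion fails: then there exist $\beta_n \uparrow 1$ and optimal stationary pairs $(f_n, g_n)$ for $\Gamma_{\beta_n}$ such that each $(f_n, g_n)$ has some pure action of zero probability in some state. By compactness of the product simplex $P_{A_1}^{K} \times P_{A_2}^{K}$, I pass to a subsequence along which $(f_n, g_n) \to (f_0, g_0)$ coordinate-wise; the limit pair $(f_0, g_0)$ also has a zero coordinate and is therefore not completely mixed.

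The core step is to show that $(f_0, g_0)$ is a saddle-point of the undiscounted game $\Gamma$, which will contradict the hypothesis that every optimal strategy of $\Gamma$ is completely mixed. On the player 1 side this is a direct application of Result 2: the saddle-point inequality $I_{\beta_n}(f_n, g) \geq v_{\beta_n}$, valid for every stationary $g$, becomes---after multiplying by $(1-\beta_n)$ and using the convergences $(1-\beta_n) I_{\beta_n}(f_n, g) \to \Phi(f_0, g)$ and $(1-\beta_n) v_{\beta_n} \to v$ guaranteed by Result 2---the desired $\Phi(f_0, g) \geq v$ for every $g$, so $f_0$ is optimal for player 1 in $\Gamma$. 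For the player 2 side, from $I_{\beta_n}(f, g_n) \leq v_{\beta_n}$ I must extract $\Phi(f, g_0) \leq v$; now both the discount factor and the controller's strategy vary simultaneously. Writing
\[
(1-\beta_n)\,I_{\beta_n}(f, g_n) \;=\; (1-\beta_n)\bigl[I - \beta_n Q(g_n)\bigr]^{-1} r(f, g_n),
\]
I will argue that in the finite state--action, single-player-controlled setting the Abel--Tauberian identity $(1-\beta_n)[I - \beta_n Q(g_n)]^{-1} \to Q^{*}(g_0)$ remains valid along this joint sequence, using continuity of $g \mapsto Q(g)$ and the analytic structure of the resolvent of a finite Markov chain; combined with $r(f, g_n) \to r(f, g_0)$ this gives the required limit $\Phi(f, g_0)$ and therefore the inequality $\Phi(f, g_0) \leq v$.

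Combining the two one-sided statements, $(f_0, g_0)$ is a saddle-point of $\Gamma$ that fails to be completely mixed, contradicting the complete mixedness hypothesis, and the theorem follows. I expect the main obstacle to be precisely the joint limit on the player 2 side: player 1 is the non-controller, for whom Result 2 applies verbatim, whereas the player 2 limit is a genuine two-parameter Tauberian statement for the controller. Verifying that the Cesàro-type convergence of resolvents passes through the joint limit $(g_n, \beta_n) \to (g_0, 1)$ is the technical heart of the argument, and it is here that the single-player-controlled assumption enters essentially, since it forces $Q$ to depend on $g$ alone and lets the resolvent be controlled uniformly in the simplex.
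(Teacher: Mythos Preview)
Your player-1 half is correct and is essentially a direct consequence of Result~2. The genuine gap is on the controller's side: the ``joint Abel--Tauberian'' statement
\[
(1-\beta_n)\bigl[I-\beta_n Q(g_n)\bigr]^{-1}\ \longrightarrow\ Q^{*}(g_0)
\]
that you propose to verify is \emph{false} in general, and no amount of finite-state analytic structure rescues it. The map $g\mapsto Q^{*}(g)$ is not continuous: for instance, with two states and $Q(\epsi)=\begin{pmatrix}1-\epsi&\epsi\\0&1\end{pmatrix}$ one has $Q^{*}(\epsi)=\begin{pmatrix}0&1\\0&1\end{pmatrix}$ for every $\epsi>0$ but $Q^{*}(0)=I$, and a short computation shows that along $\epsi_n=1-\beta_n$ the quantity $(1-\beta_n)[I-\beta_n Q(\epsi_n)]^{-1}$ converges to $\begin{pmatrix}1/2&1/2\\0&1\end{pmatrix}$, which is neither limit. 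So the two-parameter limit you hope for cannot be obtained from continuity of $Q$ in $g$ and the resolvent formula alone; the step you single out as the ``technical heart'' does not go through.

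The paper avoids this difficulty altogether. Its proof goes through Lemma~1, whose hypothesis is that the controller (player~2) possesses a \emph{single} completely mixed strategy $g^{0}$ that is optimal in every $\Gamma_\beta$ for $\beta$ near~$1$; this is supplied by the Parthasarathy uniform-optimality result for single-controller games together with the completely-mixed hypothesis on $\Gamma$. With such a $g^{0}$ in hand, Kaplansky's equalization (Result~1) applied to each Shapley matrix gives the exact identity $r(f_n,g)=[I-\beta_n Q(g)]\,v_{\beta_n}$ for \emph{every} $g$, and the proof then replaces $f_n$ by $f_0$ using continuity of $r(\cdot,g)$ while keeping $g$ \emph{fixed}. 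Because only $\beta_n$ varies in the resolvent, the ordinary one-parameter Tauberian step (together with $[I-\beta_n Q(g)]^{-1}e=\tfrac{1}{1-\beta_n}e$) suffices, and no joint limit in $(g_n,\beta_n)$ is ever needed. That is the idea your proposal is missing: rather than pass to the limit in the controller's strategy, one exploits a fixed completely mixed controller-optimal strategy, available precisely because of the single-controller structure and the completely-mixed assumption on $\Gamma$.
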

\begin{proof}
Let us assume by contradiction, that the undiscounted stochastic game is completely mixed but there does not exist any $0 \leq \beta_0<1$ such that the $\beta$-discounted game are completely mixed for all $\beta>\beta_0$.
\par Then given any $\beta\in [0,1)$ we can find  a $\beta_1$ $\in (\beta_0,1)$ such that there will exists a non-completely mixed strategy $f_1$ which is optimal for the $\beta_1$-discounted stochastic game.
\par Similarly, we can find a $\beta_2$ $\in (\beta_1,1)$ such that there will exist a non-completely mixed strategy $f_2$ which is optimal for the $\beta_2$-discounted stochastic game, as from the assumption above.
Hence we will obtain a sequence $\beta_n \uparrow 1$ such that for all $\beta_n$-discounted game we have non-completely mixed strategy $f_n$. But we have finitely many states and finitely many pure actions in the stochastic game.
Hence there exists a state $\bar{s}\in S$ and a pure action $i$ for player-1 such that the $i^{th}$ coordinate of $f_n(\bar{s})$ is zero for infinitely many $f_n$. That is, we can find a sub-sequence $f_{n_k}$ of $f_n$ for which a fixed state $\bar{s}\in S$ and a fixed pure action $i$ can be found, such that the $i^{th}$ pure strategy in the state $\bar{s}^{th}$ is always played with probability zero in the optimal strategy. Hence applying the Lemma \ref{lemma1}, limit of this sub-sequence $f_{n_k}$ of $f_n$ (denote as $f_0$) is optimal strategy for player-1 in the undiscounted game. However $f^0$ is not completely mixed, yet it is optimal in the undiscounted stochastic game. This is a contradiction. So there exists $\beta_0 \in [0,1)$ such that forall $ \beta > \beta_0 $ the $\beta-$discounted stochastic game $\Gamma_\beta$ obtained from the same payoff matrices are completely mixed. 
\end{proof}
The converse of the above theorem is not true. Let us consider an example of finite player-2 controlled zero-sum stochastic game where the $\beta$-discounted stochastic game is completely mixed for all $\beta \in [0,1)$ but the undiscounted stochastic game is \textbf{not} completely mixed.
\begin{example}
$$R(s_1) =
\begin{bmatrix}
    0/(0,1)      & 2/(0,1)   \\
    3/(0,1)       & 1/(0,1) 
\end{bmatrix}      ,       R(s_2) =
\begin{bmatrix}
    2/(0,1)      & 0/(0,1)   \\
    0/(0,1)       & 2/(0,1) 
\end{bmatrix}$$
\vspace{2mm}
\textit{Note:} $s_2$ is an absorbing state. 

Consider the $\beta$-discounted game $\Gamma_\beta$ with the mentioned states and actions.
\\ The Shapley matrix (\cite{shaply53}) $R_\beta(.)$ is given by,
$$R_\beta(s_1) =\begin{bmatrix}
    0+\frac{\beta}{1-\beta}      & 2+\frac{\beta}{1-\beta}   \\
    3+\frac{\beta}{1-\beta}       & 1+\frac{\beta}{1-\beta} 
\end{bmatrix}, \quad
R_\beta(s_2) =\begin{bmatrix}
    2+\frac{\beta}{1-\beta}      & 0+\frac{\beta}{1-\beta}   \\
    0+\frac{\beta}{1-\beta}       & 2+\frac{\beta}{1-\beta} 
\end{bmatrix} $$

\item[] Clearly, the matrix $R_\beta(s_1)$ is completely mixed for all $\beta \in [0,1)$.
\item[]Furthermore, $R_\beta(s_2)$ is also completely mixed. Hence the game $\Gamma_\beta$ is completely mixed for all $\beta \in [0,1)$.
\item But if we consider the undiscounted stochastic game $\Gamma$. Consider the strategy $f$ with $f(s_1)=(1,0) ,f(s_2)=(\frac{1}{2},\frac{1}{2})$. $f$ is a stationary optimal strategy for player-1 in the undiscounted stochastic game $\Gamma$ but not completely mixed. Hence the undiscounted game $\Gamma$ is not completely mixed. 
 \end{example}

\begin{lemma}\label{lemma2}
Let, $A=(a_{ij})$ be a symmetric matrix of order $n$ with $a_{ij}>0$ for every $i$ and $j$. Let $b^t=(b_1,b_2,\cdots,b_n)$ be a non-negative vector. Let, $C=(c_{ij})$ where, $c_{ij}=a_{ij}+b_j$. Then $C$ completely mixed matrix game implies $A$ is also a completely mixed matrix game. 
\end{lemma}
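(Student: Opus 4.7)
The plan is to exploit the rank-one relation $C = A + \mathbf{1} b^{T}$ together with the symmetry $A = A^{T}$ to identify a common strictly positive optimal strategy in $A$, and then to show that this optimal strategy is unique. First I would let $y^{*}$ be the unique positive optimal strategy of player 2 in the completely mixed game $C$; existence and uniqueness are classical consequences of Kaplansky's work. Writing $C y^{*} = v_{C} \mathbf{1}$ and peeling off the rank-one piece yields $A y^{*} = k \mathbf{1}$ with $k := v_{C} - b^{T} y^{*}$. The symmetry of $A$ then gives $(y^{*})^{T} A = k \mathbf{1}^{T}$ as well, so $y^{*}$ is simultaneously an equalizer for both players in $A$. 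This forces $v_{A} = k$ and shows $y^{*}$ is optimal for both players of the game $A$.

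Next I would invoke Result 1 with the completely mixed strategy $y^{*}$ in $A$: every optimal strategy $x$ for player 1 in $A$ must satisfy $x^{T} A = v_{A} \mathbf{1}^{T}$, and by symmetry of $A$ the same linear system $A z = v_{A} \mathbf{1}$ characterizes the optimal strategies of both players in $A$. Thus, proving $A$ is completely mixed reduces to showing that the only probability vector $z$ solving $A z = v_{A} \mathbf{1}$ is $z = y^{*}$.

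I expect this uniqueness step to be the main obstacle, and my plan is a perturbation argument that maps any competing optimal strategy back into $C$. Suppose $\bar{z} \neq y^{*}$ is also an optimal strategy in $A$, and set $d = \bar{z} - y^{*}$, so that $A d = 0$ and $\mathbf{1}^{T} d = 0$. For $|t|$ small enough, $z_{t} := y^{*} + t d$ remains strictly positive (since $y^{*} > 0$) and so is a valid mixed strategy. A direct computation gives
$$ C z_{t} = \bigl(v_{A} + b^{T} z_{t}\bigr) \mathbf{1} = \bigl(v_{C} + t\, b^{T} d\bigr) \mathbf{1}. $$
If $b^{T} d \neq 0$, I would pick $t$ of sign opposite to $b^{T} d$ and of small magnitude; then $C z_{t} < v_{C} \mathbf{1}$ componentwise, so in the game $C$ the column player would be guaranteeing a payoff strictly smaller than $v_{C}$, contradicting the value of $C$. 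If $b^{T} d = 0$, then $\bar{z}$ itself equalizes $C$ at value $v_{C}$ and hence is optimal for player 2 in $C$, contradicting the uniqueness of $y^{*}$ there. Either way no such $\bar{z}$ exists, so $y^{*}$ is the unique optimal strategy for each player in $A$, and being strictly positive it witnesses that $A$ is completely mixed.
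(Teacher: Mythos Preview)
Your argument is correct and matches the paper's proof in its first half: you use the rank-one identity $C = A + \mathbf{1}b^{T}$ to turn $Cy^{*} = v_{C}\mathbf{1}$ into $Ay^{*} = k\mathbf{1}$, and then invoke the symmetry of $A$ to conclude that $y^{*}$ is optimal for both players with $v_{A}=k$. The divergence is in the uniqueness step. The paper proceeds by first establishing that $A$ is nonsingular (using that $v_{C}>0$ gives $\det(C)\neq 0$ via Kaplansky, and then a result of Parthasarathy--Raghavan transfers this to $\det(A)\neq 0$); once $A$ is invertible, any optimal $z$ satisfies $A(z-y^{*})=0$ and hence $z=y^{*}$ immediately. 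Your perturbation argument avoids the determinant computation altogether: you push a hypothetical second optimal strategy back into $C$ via $z_{t}=y^{*}+td$ and derive a contradiction either with the value of $C$ or with the uniqueness of $y^{*}$ there. This is more self-contained --- it needs no external nonsingularity lemma and in fact never uses the hypothesis $a_{ij}>0$ --- whereas the paper's route is shorter once that lemma is available.
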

\begin{proof}
Suppose, $C$ is completely mixed. Since the value $v$ of $C$ is positive, from \cite{kapl1945} $\det(C) \neq 0$. Also from Lemma 4.1, page 381; \cite{tp1981} we have $\det(A) \neq 0$.
\par Let $y$ be a completely mixed optimal strategy for $C$. Then $Cy=ve$, where $e$ is a vector with all coordinates equals to $1$ and $v$ denotes the value of the matrix game $C$. It follows that $Ay=(v-b^ty)e=\delta e$, where $\delta = v-b^ty$. Since $A$ is symmetric, it follows that $\delta$ is the value of the matrix game $A$ and $(y,y)$ is an optimal strategy of $A$.
\par To complete the proof we will show that $y$ is the only optimal strategy for both players in the matrix game $A$. Let, $z$ be any other optimal strategy for player-2 in $A$. Then $Az=\delta e$. Since $y$ is a completely mixed optimal for both players (using Result \ref{result1}) we also have $Ay=\delta e$. Thus $A(y-z)$ equals to zero vector. Since $A$ is non-singular, $y=z$. In other words, every optimal strategy for player-2 in $A$ coincides with $y$. Similar argument holds for player-1 in $A$. This completes the proof. 
\end{proof}
A similar type of proof is also provided in \cite{sujatha2016}. We now give a simple counterexample to show that if $A$ is completely mixed matrix game, then $C$ need not be completely mixed matrix game.
\begin{example}
Let, $A=\begin{bmatrix}
    1 &2\\ 2& 1
\end{bmatrix}$ and let $b=(b_1,b_2)^t = (1,2)^t$, then $C=\begin{bmatrix}
    2 & 4\\ 3 & 3\end{bmatrix}$.
$A$ is a completely mixed matrix game but $C $ is not a completely mixed matrix game. So the converse of Lemma \ref{lemma2} is not true.
\end{example}
\textit{Remark:} Lemma \ref{lemma2} also holds under both the following circumstance.  
\\1. a pair of completely mixed optimal strategies exist for the two players in $C$ instead of $C$ to be a completely mixed matrix game.
\\2. $A$ to be a symmetric, nonnegative and irreducible instead of $A$ to be a strictly positive matrix.
\par Undiscounted completely mixed stochastic games not only proceed completely mixed $\beta$-discounted stocha- stic games for large enough $\beta$ (see Theorem \ref{main.theorem}) but also process completely mixed matrix games under some symmetry assumption. 
\begin{theorem}\label{theorem2}
Consider a finite, undiscounted, zero-sum, single player (player-2) controlled completely mixed stochastic game $\Gamma$. Assume all the individual payoff matrices $R(s)$ are symmetric then the individual matrix games $R(s)$ is completely mixed for all $s \in S$.
\end{theorem}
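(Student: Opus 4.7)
The plan is to bridge the individual stage matrices $R(s)$ and the completely mixed undiscounted game via the Shapley matrices of the $\beta$-discounted games for $\beta$ near $1$, invoking Theorem 1 to obtain discounted complete mixedness and Lemma 2 to descend to each $R(s)$.

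First, I would invoke Theorem 1 to obtain $\beta_0 \in [0,1)$ with $\Gamma_\beta$ completely mixed for every $\beta \in (\beta_0, 1)$. By Shapley's construction, the stationary optimal pairs of $\Gamma_\beta$ correspond, state by state, to the optimal pairs of the local matrix game
\[
R_\beta(s) \;=\; R(s) + \beta\, T_\beta(s), \qquad (T_\beta(s))_{ij} \;=\; \sum_{s' \in S} q(s' \mid s, i, j)\, v_\beta(s'),
\]
so complete mixedness of $\Gamma_\beta$ is equivalent to each $R_\beta(s)$ being a completely mixed matrix game. Using the player-$2$-controlled hypothesis $q(s' \mid s, i, j) = q(s' \mid s, j)$, the entry $(T_\beta(s))_{ij}$ depends only on $j$; setting $b_j(s) = \sum_{s'} q(s' \mid s, j)\, v_\beta(s')$, the Shapley matrix takes the form
\[
(R_\beta(s))_{ij} \;=\; R(s)_{ij} + \beta\, b_j(s),
\]
which is exactly the structure to which Lemma 2 applies, with $A = R(s)$ (symmetric by hypothesis) and additive column-vector $\beta\, b(s)$.

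To secure the positivity hypotheses of Lemma 2, I would first replace every entry of every payoff matrix by adding a large constant $c > 0$. This uniform shift preserves symmetry of each $R(s)$, shifts $v$ by $c$ and $v_\beta$ by $c/(1-\beta)$, and leaves the sets of optimal stationary strategies unchanged; hence the shifted game is again a single-player-controlled, completely mixed undiscounted stochastic game with symmetric stage matrices, and the preceding paragraph applies to it. For $c$ large enough every shifted stage-matrix entry is strictly positive, and for $\beta$ close to $1$ every shifted value $v_\beta(s')$ is strictly positive, so the corresponding $b_j(s)$ are non-negative. Lemma 2 then gives that the shifted stage matrix is completely mixed; since adding a constant to every entry of a matrix game preserves the set of optimal strategies, $R(s)$ itself is completely mixed.

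The main obstacle I anticipate is the careful bookkeeping through this shift: verifying that (i) a uniform additive shift of all payoffs preserves the symmetric, single-player-controlled, completely mixed structure of $\Gamma$, and (ii) the Shapley matrix of the shifted game still has the column-additive structure required by Lemma 2, with the additive vector made non-negative by the same shift. Both verifications are elementary but deserve explicit mention, after which the conclusion follows immediately.
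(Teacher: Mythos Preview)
Your proposal is correct and follows essentially the same route as the paper: invoke Theorem~1 to get complete mixedness of $\Gamma_\beta$ for $\beta$ near $1$, write each Shapley matrix $R_\beta(s)$ as $R(s)$ plus a column-constant vector using the player-$2$-controlled hypothesis, and then apply Lemma~2. The paper handles the positivity hypotheses with a bare ``without loss of generality $R(s)$ is positive,'' whereas you spell out the additive shift and its effect on $v_\beta$ and $b(s)$ explicitly; this is the same idea, just with the bookkeeping made visible.
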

\begin{proof}
Assume without loss of generality $R(s)$ is positive, that is every entry in $R(s)$ is positive. Take any $\beta > \beta_0$ where, $\beta_0$ is from Theorem \ref{main.theorem}. Then the matrix game $R_{\beta}(s)$ (also known as Shapley matrix) is completely mixed for all $s\in S$, where $R_{\beta}(s)=R(s)+b(s)$, with $b_j(s)=\beta \sum_{s' \in S}v_{\beta}(s')q(s'|s,j)$. Now the result is immediate from Theorem \ref{main.theorem} and Lemma \ref{lemma2}. 
\end{proof}

Example of completely mixed undiscounted stochastic game are far from obvious. The following is a finite single player controlled completely mixed stochastic game.
\begin{example}\label{example3}
$$R(s_1) =
\left[\begin{matrix}
    2/(1,0)      & 0/(0,1)   \\[10pt]
    0/(1,0)       & 2/(0,1) 
\end{matrix}\right] \quad     , \quad      R(s_2) =
\left[\begin{matrix}
    3/(1,0)      & -1/(0,1)  \\[10pt]
    -1/(1,0)     & 3/(0,1)
\end{matrix}\right]
$$
This is a player-2 controlled stochastic game with 2 states $s_1$ and $s_2$. In both state $s_1$ and $s_2$ if player-2 chooses action 1 (column 1), then the game moves to state $s_1$ in the next day and if player-2 chooses action 2 (column 2), then the game moves to state $s_2$ in the next day.

The unique optimal strategy for player-1 is $f= \{(\frac{1}{2},\frac{1}{2}), (\frac{1}{2},\frac{1}{2})\}$ in the above-mentioned undiscounted game. That is, choosing row 1 and row 2 is state $s_1$ with probability $\frac{1}{2}$ and $\frac{1}{2}$ respectively and choosing row 1 and row 2 is state $s_2$ with probability $\frac{1}{2}$ and $\frac{1}{2}$ respectively. The player-2's unique optimal strategy is $g=\{(\frac{1}{2},\frac{1}{2}),(\frac{1}{2},\frac{1}{2})\}$. So $(f,g)$ is the unique optimal strategy for the undiscounted game mentioned before.
\end{example}
In Example \ref{example3}, we can see that for all $\beta \in [0,1)$ the $\beta$-discounted stochastic game $\Gamma_{\beta}$ are completely mixed. As both $s_1$ and $s_2$ are symmetric matrix Theorem \ref{theorem2} says individual matrix games are completely mixed, which is easy to see in the above example. 

Similar to Theorem \ref{main.theorem}, if we know the value of an undiscounted stochastic game is non-zero then we can conclude the value for the $\beta$-discounted games are also non-zero. 
\begin{theorem}\label{theorem3}
Assume a player-2 controlled transition. Let, $v(s)$ and $v_\beta(s)$ be the value of an undiscounted and $\beta$-discounted stochastic game in state $s\in S$ respectively. If $v(s) \neq 0 $ for some $s \in S$ then there exists $\beta_0\in [0,1)$ such that for all $\beta > \beta_0$, the discounted value $v_\beta(s) \neq 0 $. 
\end{theorem}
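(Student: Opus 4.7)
The plan is to deduce the theorem directly from Result 2 of Parthasarathy--Raghavan quoted in the preliminaries, which, under the single--player controlled hypothesis, gives the Abel--type identity
$$v(s) \;=\; \lim_{\beta \uparrow 1}(1-\beta)\, v_\beta(s) \qquad \text{for every } s \in S.$$
Once this identity is available, the theorem reduces to a routine limit argument: the hypothesis $v(s)\neq 0$ lets us separate $(1-\beta)v_\beta(s)$ from zero for $\beta$ close enough to $1$, and then the fact that $1-\beta>0$ lets us divide it out.

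More precisely, I would fix a state $s\in S$ with $v(s)\neq 0$ and apply the limit with $\epsilon=|v(s)|/2$. This produces some $\beta^*\in[\beta_0,1)$ with the property that $(1-\beta)v_\beta(s)$ lies in the open ball of radius $|v(s)|/2$ around $v(s)$ for every $\beta\in(\beta^*,1)$. That ball excludes $0$, so $(1-\beta)v_\beta(s)\neq 0$, and since $1-\beta>0$ for every $\beta\in[0,1)$, we can divide through and conclude $v_\beta(s)\neq 0$ for every such $\beta$. In fact the same argument yields the sharper statement that $v_\beta(s)$ eventually has the \emph{same sign} as $v(s)$, which is a pleasant byproduct.

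There is no substantive obstacle here: the entire analytic content of the theorem is already packaged inside Result 2, whose proof is cited from \cite{tp1981}. The only point that genuinely needs to be checked is that Result 2 applies in the setting of the theorem, which is immediate because the standing hypothesis of a player 2 controlled game is exactly the hypothesis under which Result 2 is stated. Thus the proof really is a one--paragraph deduction, in the same spirit as the end of the lemma preceding Theorem 1.
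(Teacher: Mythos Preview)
Your proposal is correct and follows essentially the same route as the paper: both arguments rest on the identity $v(s)=\lim_{\beta\uparrow 1}(1-\beta)v_\beta(s)$ from Result~2, and then observe that a nonzero limit forces $(1-\beta)v_\beta(s)$, hence $v_\beta(s)$, to be nonzero for $\beta$ close to $1$. The paper dresses this up by passing through the optimal strategies $(f^\beta,g^0)$ and the payoffs $I_\beta$, $\Phi$, but that detour is not doing any additional work---your direct $\epsilon$--$\beta^*$ argument is cleaner and also makes explicit the sign preservation that the paper's ``without loss of generality $v(s)>0$'' step only gestures at.
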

\begin{proof}
Since $v(s) \neq 0$ from \cite{tp1981} there exist $(f^0,g^0)$ optimal strategy in $\Gamma$ such that $\lim_{\beta \uparrow 1}f^{\beta}(s) = f^0(s)$. And $(f^\beta,g^0)$ is optimal stationary strategy in $\Gamma_{\beta}$ for all $\beta > \beta_0$ for some $\beta_0\in [0,1)$.
Now without loss of generality we assume the reward $r(i,j,s)>0$ for all $i\in \mathbb{P}_1$, $j\in \mathbb{P}_2$ and $s\in S$. Hence, $R(s)>0$ which implies $v(s)>0$ for all $s\in S$. Now,   
$$0 < v(s) = \Phi(f^0,g^0)(s) \leqslant \Phi(f^0,g)(s)$$ for all $g \in \mathbb{P}_{A_2}$ (Since we assume player  is maximizer and player-2 is minimizer).
Therefore,
 $$ 0< \Phi(f^0,g^0)= \lim_{\beta \uparrow 1} (1-\beta) I_\beta(f^0,g^0)(s).$$ Implies,
$ \lim_{\beta \uparrow 1} (1-\beta) I_\beta((\lim_{\beta \uparrow 1}f^\beta),g^0)(s)  > 0 $. This follows,
 $\lim_{\beta \uparrow 1} (1-\beta) I_\beta(f^\beta,g^0)(s)> 0$ (as for player-2 controlled stochastic game $I_{\beta}$ is continious and linear in player-1's strategy). Then, there exists $\beta_0\in [0,1)$ such that for all $\beta >\beta_0$ we have $ I_\beta(f^\beta,g^0)(s) > 0$.
Hence the Lemma follows.
 \end{proof}
\textit{Note:} The above theorem is true for non-zerosum stochastic game also. For non-zerosum stochastic game the proof will be exactly similar to the above proof using results of \cite{tp1981}. 
\par \textit{Note:} The converse of the Theorem \ref{theorem3} is not true. The following example shows that.
\begin{example}\label{example4}
Consider the following player-2 controlled stochastic game: 
$$R(s_1) =
\begin{bmatrix}
    4/(0,1,0)      & 2/(0,0,1)   \\
    3/(0,1,0)       & 1/(0,0,1) 
\end{bmatrix}      , \;      R(s_2) =
\begin{bmatrix}
    2/(0,1,0)      & 0/(0,1,0)  \\
    0/(0,1,0)     & 2/(0,1,0) 
\end{bmatrix}  ,$$ $$       R(s_3) =
\begin{bmatrix}
    1/(0,0,1)      & -1/(0,0,1)  \\
    -1/((0,0,1)     & 1/(0,0,1) 
\end{bmatrix}$$
\end{example}
States $s_2$ and $s_3$ are absorbing states. $R_{\beta}(s_1)$ is the auxiliary game (also known as Shapley matrix) in state 1 (denoted as $s_1$). For all $\beta \in [0,1)$ $val(R_{\beta}(s_1)) = 2$, where $val(.)$ denotes the value of the corresponding matrix game. But the undiscounted value in state 1 is given by-
$ v(s_1) = \lim_{\beta \uparrow 1}(1-\beta) v_{\beta}(s_1) =0.$

\begin{lemma}\label{lemma3}
Let $C$ be a completely mixed matrix game. $c_{ij}=a_{ij}+b_j$ where $a_{ij}, b_{j}>0 \; \forall \; i, j.$ If $val(A) = val(A^t)$ then $val(A)=val(C)-b^ty_0$ where, $y_0$ is optimal strategy for player-2 in the matrix game $C$. Also, we have player-1 has an unique optimal in the matrix game $A$.
\end{lemma}
\begin{proof}
We skip the proof as it is straight forward. 
\end{proof}
Under the symmetric assumption, the undiscounted value satisfies linear equations. 
\begin{theorem}
Let $\Gamma$ be player-2 controlled completely mixed undiscounted stochastic game. Let for all $s \in S$, the individual payoff matrices $R(S)$ proceeds $val(R(s))=val(R(s)^t$ where, $val(.)$ denotes the value of the corresponding matrix game. Then, for $g^0$-the unique optimal for player-2 in undiscounted game $\Gamma$ the following equality holds.
$$\text{ for all } s\in S, \qquad v(s)=\sum_{s'\in S} v(s')q(s'|s,g^0).$$
\end{theorem}
\begin{proof}
The Shapley matrix in state s, for the $\beta$-discounted stochastic game $\Gamma_{\beta}$ with same payoff matrix $R(s)$ as of $\Gamma$ is denoted as $R_{\beta}(s)$.
$$R_{\beta}(s) = \left[r(i,j,s)+\beta\sum_{s'\in S} v_{\beta}(s')q(s'|s,j)\right]_{ij}$$
Now from Theorem \ref{main.theorem} we know that the $\beta$-discounted stochastic game $\Gamma_{\beta}$ with same payoff matrix is completely mixed for all $\beta > \beta_0$. This implies $\forall s\in S$ and $\forall \beta$ sufficiently close to $1$ (say $\beta > \beta_0$) the Shapley matrix  $R_{\beta}(s)$ is completely mixed.
\\ Assume the unique optimal (\cite{filr1985}) in the undiscounted game to be $(f^0,g^0)$. We also have $g^0$ is an uniform discount optimal for player-2 in the undiscounted game $\Gamma$.
\\ Without loss of generality we will assume the matrix $R(s)>0 \forall s\in S$. Take $A=R(s)$ and $C=R_{\beta}(s)$ in Lemma \ref{lemma3} gives,
$$val(R(s))=val(R_{\beta}(s))-\sum_{j} \beta\sum_{s'\in S} v_{\beta}(s')q(s'|s,j)g^0(s)_j,$$
where $g^0(s)_j$ is the $j^{th}$ coordinate of $g^0(s)$. So we have the following.
$$ val(R(s))=v_{\beta}(s)- \beta\sum_{s'\in S} v_{\beta}(s')\sum_{j}q(s'|s,j)g^0(s)_j.$$
Which then follows:
$$val(R(s))=v_{\beta}(s)- \beta\sum_{s'\in S} v_{\beta}(s')q(s'|s,g^0).$$
Multiplying by $(1-\beta)$ in both the sides we obtain:
$$ (1-\beta)val(R(s))=(1-\beta)v_{\beta}(s)- \beta(1-\beta)\sum_{s'\in S} v_{\beta}(s')q(s'|s,g^0).$$ 
The above equality is true for all $\beta \in [0,1)$. Taking limit we have the following.
$$ \lim_{\beta \uparrow 1} (1-\beta)val(R(s)) = \lim_{\beta \uparrow 1}(1-\beta)v_{\beta}(s)- \lim_{\beta \uparrow 1}\beta\sum_{s'\in S} (1-\beta) v_{\beta}(s') q(s'|s,g^0)$$ 

$$\implies 0 = v(s)- \sum_{s'\in S} v(s') q(s'|s,g^0)$$

\end{proof}
\begin{remark}\label{remark1}
If $\Gamma$ is an undiscounted non-zerosum single player (player-2) controlled completely mixed stochastic game then all the strictly positive strategy for player-2 partitions the states $S$ of undiscounted game $\Gamma$ into $k$ sets of ergodic chains
$C_1,C_2,\cdots,C_k$ and a set $H$ of transient states. Also the stochastic game $\Gamma$ can be divided into $k$ subgames $\Gamma_1,\Gamma_2,\cdots, \Gamma_k,$ with value that is independent of states in each of the game $\Gamma_s$ corresponding to states $C_s$.
\\For the zero-sum game in \cite{rag1984} it is shown that if $s\in H $ then player-1 and player-2 both has exactly one action in the state $s\in S$.
\end{remark}
Using Remark \ref{remark1} and the fact that for player-2 controlled stochastic game the value is a linear function of player-1's strategy we can show that for an undiscounted stochastic game every completely mixed optimal strategy proceeds an equalizer rule in term of undiscounted value.
\begin{theorem}\label{theorem5}
Let $\Gamma$ be a zero-sum undiscounted stochastic games where player-2 controls the transition probability. If $(f^0,g^0)$ is a completely mixed optimal strategy in the game $\Gamma$ then the game follows the equalizer rule.
\[\text{ for all } f\in \mathbb{P}_{1} \text{ and } g\in \mathbb{P}_{2}\qquad \Phi(f^0, g^0)=\Phi(f^0, g)=\Phi(f, g^0).\]
\end{theorem}
\begin{proof}
Filar (\cite{filr1985}) shows Theorem \ref{theorem5} when the strategies $f$ and $g$ are restricted to only the pure strategies of the respective players under the completely mixed assumption of the game.

Since the stochastic game $\Gamma$ is completely mixed from Remark \ref{remark1} we can conclude that it is sufficient to look at the stochastic games $\Gamma_c$ separately. Now for all the subgame  $\Gamma_c$ with states restricted to $C_c$ are completely mixed. Also, if $(f^0,g^0)$ is optimal for the stochastic game $\Gamma$ then $(f^0,g^0)$ restricted to state $C_c$ (denoted as $(f^0,g^0)_c)$ is optimal strategy in the stochastic game $\Gamma_c$.     

If we fixed player-1's (unique) optimal strategy $f^0_c$ then there exists vector $\gamma^0$ along with value $v_c\textbf{1}$ of the game $\Gamma_c$, which satisfy the following equality (\cite{Hordijk1979}):
$$v_c+ \gamma^0(s)= r_c(f^0,j,s)+  \sum_{s' \in S} q(s'|s,j)\gamma^0(s')$$
for all pure strategy $j$ for player-2 and for all $s\in C_c$. This implies 
$$ v_cg_j(s)+ \gamma^0(s)g_j(s)= r_c(f^0,j,s)g_j(s)+  \sum_{s' \in S} q(s'|s,j)g_j(s)\gamma^0(s')$$
for all player-2's pure strategy $j$, for all $s\in C_c$ and $g\in \mathbb{P}_{A_2}$. So,
$$ \sum_{j} v_cg_j(s)+ \sum_{j}\gamma^0(s)g_j(s)=\sum_{j} r_c(f^0,j,s)g_j(s)+ \sum_{j} \sum_{s' \in S} q(s'|s,j)g_j(s)\gamma^0(s')$$
for all $s\in C_c$. So we obtain for all $s\in C_c$:
$$ v_c+ \gamma^0(s)= r_c(f^0,g,s)+  \sum_{s' \in S} q(s'|s,g)\gamma^0(s').$$
Writing the above equality in a vector notation we obtain.
$$ v_c\textbf{1}+ \gamma^0= r_c(f^0,g)+  Q(g)\gamma^0$$
for all $g\in \mathbb{P}_{A_2}$. Multiplying the Markov matrix $Q^*(g)$ in both the sides of the above equality we get:
$$v_c\textbf{1}+ Q^*(g)\gamma^0= \Phi_c(f^0,g)+  Q^*(g)\gamma^0.$$
Which indeed follows,
$$ \text{ for all } g\in \mathbb{P}_{A_2} \qquad \Phi_c(f^0,g) =\Phi_c(f^0,g^0).$$
\\ The above argument is true for all the individual completely mixed games $\Gamma_1,\Gamma_2,\cdots\Gamma_k$. Now if $s\in H$ ($H$ is the set of all transient states) then let $p_g(s,s')$ be the probability that $s'$ is the first state reached outside $H$. So for all $s\in H$,
$$\Phi(f^0,g)(s)= \sum_{s\in H^c} p_g(s,s') \Phi(f^0,g)(s').$$
We already have $\Phi_c(f^0,g)(s) =\Phi_c(f^0,g^0)(s)$ for $s\in H^c$. Hence the claim is proved.
\\The other side of the equality directly follows from the Proposition 3.3; \cite{filr1985} and the fact that $\Phi$ is continuous on player-1's strategy.
\end{proof}
Let $\Gamma$ be a player-2 controlled zero sum undiscounted stochastic game with individual payoff matrix $R(s)$ for all $s \in S$. Let $\Gamma_{\beta}$ be the corresponding $\beta$-discounted stochastic game with same payoff matrix $R(s)$.
\\ We say a stationary strategy $g^0$ is \textbf{uniform discount optimal} for player-2 in the game $\Gamma$ if there exists $\beta_0 \in [0,1)$ such that $g^0$ is optimal for player-2 in the corresponding $\beta$-discounted game $\Gamma_{\beta}$ for all $\beta > \beta_0$.

\begin{remark}[\cite{tp1981}]\label{remark2}
For $\beta$-discounted stochastic game for each $s\in S$ there exists a nonsingular sub-matrix $\dot{R}_{\beta}(s)$ of Shapley matrix $R_{\beta}(s)$ 
such that if we define for all $\beta > \beta_0$
\begin{center}
$\dot{f}^{\beta}(s)= v_{\beta}(s)\textbf{e}[\dot{R}_{\beta}(s)]^{-1}$
and,\\
$\dot{g}^{\beta}(s)= v_{\beta}(s)[\dot{R}_{\beta}(s)]^{-1}\textbf{e},$
    
\end{center}
where, $e$ is the column vector of all $1$'s. Then the set of pair $(f^{\beta},g^{\beta})$ obtained from $(\dot{f}^{\beta}(s),\dot{g}^{\beta}(s))$ by adding zero in the place corresponding to the rows/ columns of $\dot{R}(s)$ which are not in $R(s)$ from an optimal stationary strategy pair $(f^{\beta},g^{\beta})$ in $\Gamma_{\beta}$ for all $\beta > \beta_0$.
\\ Furthermore for all $s\in S$ it is also shown that, $\dot{R}(s)$ is non-singular and $\dot{g}^{\beta_1}(s)=\dot{g}^{\beta_2}(s)$ for all $\beta_1, \beta_2 > \beta_0$. Denote, $\dot{g}^{\beta_1}(s) =\dot{g}^{0}(s) \quad \forall \beta> \beta_0$. Then $g^0=(g^0(1),g^0(2),\cdots,g^0(S))^t$ where $g^0(s)$ is obtained by completing $\dot{g}(s)$ with $0's$ is an uniformly discount optimal for player-2 in $\Gamma$. Also it turns out that $(f^0,g^0)$ where $f^0(s)= \lim_{\beta \uparrow 1} f^{\beta}(s)$ is an optimal strategy pair in the undiscounted game $\Gamma$.
 
\end{remark}
\begin{definition} (\cite{rag1984})
A game (matrix,bimatrix,stochastic) is called `CM-I' if player-1 only has completely mixed optimal strategies.
Similarly, we can define `CM-II'.
\end{definition}
For undiscounted stochastic game this definition was first introduced by Filar(\cite{filr1981}).
In matrix game, bimatrix game and discounted stochastic game if both player has same number of pure strategy than CM-I implies Completely mixed game and hence CM-II. Similarly CM-II also implies completely mixed game and hence CM-I. But for undiscounted stochastic game this is far form reality. 

%
\begin{lemma}\label{lemma4}
If an undiscounted player-2 controlled stochastic game is CM-I then player-2 has a completely mixed optimal in $\Gamma$ and there exists an $\beta_0 \in [0,1)$ such that for all $\beta > \beta_0$ the $\beta$-discounted stochastic game $\Gamma_{\beta}$ is completely mixed.
\end{lemma}
\begin{proof}
The proof goes exactly in the same way of Theorem \ref{main.theorem}. And since for $\beta$ closed to $1$ $\Gamma_\beta$ is completely mixed, the discount optimal for player-2 is also completely mixed. Hence that optimal is a completely mixed optimal for player-2 in the undiscounted stochastic game.
 \end{proof}
Lemma \ref{lemma4} only says the $\beta$-discounted stochastic games are completely mixed. It is an unsolved question if the undiscounted stochastic game is completely mixed or not.



\section{Non-zerosum game}\label{nonzero}


Unless mentioned otherwise throughout Section \ref{nonzero} both player-1 and player-2 are maximizer.
For the non-zerosum discounted stochastic game we have the following result which is needed to prove the main result.
\begin{theorem}(\cite{fillerbook1996})\label{theorem6}
The following assertions are equivalent:
\\ (i) $(f^0, g^0)$ is an equilibrium point (EP) in the discounted stochastic game with
equilibrium payoffs\\ $(v_{\beta}^1(f^0, g^0), v_{\beta}^2(f^0, g^0))$.
\\(ii) For each $s \in S$, the pair $(f^0(s),g^0(s))$ constitutes an equilibrium point in the static bimatrix game $(B^1_{\beta}(s), B^2_{\beta}(s))$ with equilibrium payoffs
$(v_{\beta}^1(f^0, g^0)(s), v_{\beta}^2(f^0, g^0)(s))$, where
  		
$$B^1_{\beta}(s) = \Bigg[(1-\beta)r^1(i,j,s)+\beta\sum_{s'\in S} v^1_{\beta}(s')q(s'|s,i,j)\bigg]_{ij}$$
  		
$$B^2_{\beta}(s) = \Bigg[(1-\beta)r^2(i,j,s)+\beta\sum_{s'\in S} v^2_{\beta}(s')q(s'|s,i,j)\bigg]_{ij}$$
  		
\end{theorem}
For bimatrix game we have an equalizer rule for the payoff under certain compeltely mixed assumption.
\begin{lemma}\label{lemma5}
Let $(A,B)$ be a bimatrix game. If $(x^0,y^0)$ be a EP in the bimatrix game with $y^0$ completely mixed then $r^2(x^0,y)=r^2(x^0,y^0)$ for all player-2's stationary strategy $y$.
\end{lemma}
\begin{proof}
Since $(x^0,y^0)$ is an equilibrium point for all stationary strategy $y$ of player-2 we have $x^{0^{t}}By^0\geq x^{0^{t}}By$. Taking $y$ as $(1,0,\cdots,0) ,(0,1,\cdots,0) ,\cdots,(0,0,\cdots,1)$ 
respectively we get ${x^0}^{t}B\leq v_2 e$ where, $v_2=x^{0^{t}}By^0$.
\\Now from the assumption we have $y^0 > 0$ coordinatewise. Hence $x^0B=v_2e^t$ as otherwise,
$$v_2 = {x^0}^{t}By^0 < v_2e^ty_0=v_2$$
is a contradiction. Hence the Lemma follows.
 \end{proof}
A corresponding non-zerosum version of Lemma \ref{lemma1} is provided below.
\begin{lemma}\label{lemma6}
Assume player-2 controlled transition. Suppose, 
$\exists \; \beta_0 \in [0,1)$ and NE $(f^{\beta},g^0)$ in $\Gamma_{\beta}$ such that $g^0$ is completely mixed,
in \textbf{every} $\beta$-discounted non-zerosum stochastic game for all $\beta > \beta_0$.
Let, $\beta_n \in [\beta_0,1)$ be such that $\beta_n \uparrow 1$. Let $(f_n,g^0)$ be NE for $\beta_n$-discounted non-zerosum stochastic game. Suppose $f_n \rightarrow f_0$ coordinate-wise, that 
is $f_n(s) \rightarrow f_0(s)$ for each state $s\in S$, then, $(f_0,g^0)$ is equilibrium pair in the undiscounted non-zerosum stochastic game $\Gamma$.
\end{lemma}
\begin{proof}
Under one player (player-2) controlled transition probability, an undiscounted non-zerosum stochastic game has value restricted to stationary strategy (CITATION REQUIRED). Suppose, $(f_n,g^0)$ is NE for $\beta_n$-discounted stochastic game. As we have a completely mixed strategy $g_0$ for player-2 in the $\beta_n$-discounted stochastic game from Theorem \ref{theorem6} and Lemma \ref{lemma5} we have
$$I^2_{\beta_n}(f_n,g) \equiv v^2_{\beta_n} $$
coordinatewise for any stationary strategy $g$ of player-2, where
\[I^2_{\beta_n}(f_n,g)=(I^2_{\beta_n}(f_n,g)(s_1),\cdots, I^2_{\beta_n}(f_n,g)(s_K))^t\] and
$v^2_{\beta_n}=(v^2_{\beta_n}(s_1),\cdots,v^2_{\beta_n}(s_K))$. Therefore we have $[I-\beta_n Q(g)]^{-1}$ $
r^2(f_n,g)\equiv v^2_{\beta_n}$. Now we have $[I-\beta_n Q(g)]^{-1} =\sum_{k=0}^\infty\beta_n^k Q^k(g)$  is a non-negative matrix. Hence we have the expression of $r^2(f_n,g)$ as follows.
$$ r^2(f_n,g) \equiv [I-\beta_n Q(g)] v^2_{\beta_n} $$
Now we have $f_n \rightarrow f_0$ point-wise and the reward function $r(.,.)$ is a continuous function on the strategy of player-1. Hence, for any given $\epsilon > 0 $ we have,
$$[I-\beta_n Q(g)] v^2_{\beta_n} - \epsilon e \leq r^2(f_0,g) \leq [I-\beta_n Q(g)] v^2_{\beta_n}+ \epsilon e$$
coordinate-wise for all $n\geq N_0$, for some $N_0$. Where $e$ is a suitable length column vector with all entry as 1. Therefore we have,
$$v^2_{\beta_n} - \epsilon [I-\beta_n Q(g)]^{-1} e \leq [I-\beta_n Q(g)]^{-1} r^2(f_0,g) \leq  v^2_{\beta_n}+ \epsilon [I-\beta_n Q(g)]^{-1} e.$$
As, $(1-\beta_n)$ is always non-negative for all $\beta_n \in [0,1) $, we have,
$$(1-\beta_n)v^2_{\beta_n} - (1-\beta_n) \epsilon [I-\beta_n Q(g)]^{-1} e \leq (1-\beta_n) [I-\beta_n Q(g)]^{-1} r^2(f_0,g) $$$$\leq (1-\beta_n) v^2_{\beta_n}+ (1-\beta_n) \epsilon [I-\beta_n Q(g)]^{-1} e.$$
Since $[I-\beta_n Q(g)]^{-1}e= [\sum_{k=0}^{\infty} \beta_n^k Q^k(g)]e = \sum_{k=0}^{\infty} \beta_n^k [Q^k(g)e] = \sum_{k=0}^{\infty} \beta_n^k e$ as $Q^k$ is a stochastic matrix for each $k$. Therefore the above inequality reduces to the following inequality.
$$(1-\beta_n)v^2_{\beta_n} -  \epsilon  e \leq (1-\beta_n) [I-\beta_n Q(g)]^{-1} r^2(f_0,g) \leq (1-\beta_n) v^2_{\beta_n}+ \epsilon e.$$
Now if we let $\beta_n \uparrow 1$ from \cite{tp1981} the above inequality will look as follows.
$$v^2-\epsilon e \leq \Phi^2(f_0,g) \leq v^2+\epsilon e. $$ 
Where, $v^2=(v^2(s_1),\cdots,v^2(s_K))^t$ is the value of the undiscounted stochastic game. This is true for any $\epsilon > 0 $. Hence $(f_0,g^0)$ constricted above is NE in the non-zerosum undiscounted stochastic game.
 \end{proof}

The following is an example of a non-zerosum completely mixed undiscounted stochastic game. This is the corresponding non-zerosum version of Example \ref{example3}.
\begin{example}\label{example5}
$$R(s_1) =
\begin{bmatrix}
    0,1/(0,1)      & 2,-1/(1,0)   \\
    2,-1/(0,1)       & 0,1/(1,0) 
\end{bmatrix}      ,   \qquad    R(s_2) =
\begin{bmatrix}
    4,-3/(1,0)      & -2,3/(0,1)   \\
    -2,3/(1,0)       & 4,-3/(0,1) 
\end{bmatrix}$$
This example is a player-2 controlled stochastic game with states $s_1$ and $s_2$. player-2 controlled the transition. In both state $s_1$ and $s_2$ if player-2 chooses action 1 (column 1) then the game moves to state $s_1$ in the next day and if player-2 chooses action 2 (column 2) then the game moves to state $s_2$ in the next day.

\par The unique equilibrium strategy for player-1 is $\{(\frac{1}{2},\frac{1}{2}), (\frac{1}{2},\frac{1}{2})\}$ in the above-mentioned game. that is choosing row 1 and row 2 is state $s_1$ with probability $\frac{1}{2}$ and $\frac{1}{2}$ respectively and choosing row 1 and row 2 is state $s_2$ with probability $\frac{1}{2}$ and $\frac{1}{2}$ respectively. The unique equilibrium strategy for player-2 is $\{(\frac{1}{2},\frac{1}{2}), (\frac{1}{2},\frac{1}{2})\}$ for the above-mentioned game.

In example \ref{example5}, we can see that for all $\beta \in [0,1)$ the $\beta$-discounted non-zerosum stochastic game $\Gamma_{\beta}$ is completely mixed.

$$B_{\beta}^1(s_1) =
\begin{bmatrix}
    \beta & 2-\beta   \\
    2-\beta       & \beta 
\end{bmatrix}      ,      B_{\beta}^2(s_1) =
\begin{bmatrix}
    1-\beta      & \beta-1   \\
    \beta-1       & 1-\beta 
\end{bmatrix}$$

$$B_{\beta}^1(s_2) =
\begin{bmatrix}
    4-3\beta & 3\beta-2   \\
    3\beta-2       & 4-3\beta 
\end{bmatrix}      ,      B_{\beta}^2(s_2) =
\begin{bmatrix}
    3(\beta-1)      & 3(1-\beta)   \\
    3(1-\beta)      & 3(\beta-1)
\end{bmatrix}$$

\end{example}
The following Lemma is required to prove the later Theorem stating equalizer rule for general undiscounted stochastic game. In the following Lemma and Theorem we do not anymore have the assumption that only one player controls the transition probability. 
\begin{lemma}\label{lemma7}
Let $\Gamma$ be a non-zerosum undiscounted stochastic game (player-2 controlled is not necessary for this result). Let $(f^0,g^0)$ be a completely mixed NE for the undiscounted stochastic game $\Gamma$. Then there exists two vectors $\gamma^1=(\gamma^1(s_1),\cdots,\gamma^1(s_K))^t$ and $\gamma^2=(\gamma^2(s_1),\cdots,\gamma^2(s_K))^t$ such that
$$v_{f^0,g^0}^2(s)+\gamma^2(s) =  r^2(f^0,j,s)+  \sum_{s'\in S} q(s'|s,f^0,j)\gamma^2(s')$$
for all pure action $j$ for player-2 and $s\in S$. And
  		
$$v_{f^0,g^0}^1(s)+\gamma^1(s) =  r^1(i,g^0,s)+  \sum_{s'\in S} q(s'|s,i,g^0)\gamma^1(s')$$
for all pure action $i$ of player-1, and $s\in S$,
where, $v_{f^0,g^0}^k(s)$ for $k=\{1,2\}$ is the value in the undiscounted stochastic game for player $k$ corresponding to the NE $(f^0,g^0)$ in state $s \in S$.
\end{lemma}
\begin{proof}
From Markov decision process (\cite{fillerbook1996}, theorem 3.8.4) we have existence of vectors $\gamma^1$ and $\gamma^2$ such that $$v_{f^0,g^0}^2(s)+\gamma^2(s) = \max_{\sigma} \bigg[ r^2(f^0,\sigma,s)+  \sum_{s'\in S} q(s'|s,f^0,\sigma)\gamma^2(s') \bigg],$$
for all  stationary strategy $\sigma\in \mathbb{P}_2$ and $s\in S$. And
$$v_{f^0,g^0}^1(s)+\gamma^1(s) = \max_{\mu} \bigg[ r^1(\mu,g^0,s)+  \sum_{s'\in S} q(s'|s,
\mu,g^0)\gamma^1(s') \bigg],$$
for all stationary strategy $\mu \in \mathbb{P}_1$ and $s\in S$. Therefore for all pure action $i$ for player-1, for all pure action $j$ for player-2 and $s\in S$: 
$$v_{f^0,g^0}^2(s)+\gamma^2(s) \geq  r^2(f^0,j,s)+  \sum_{s'\in S} q(s'|s,f^0,j)\gamma^2(s') \qquad \text{ and, }$$
$$v_{f^0,g^0}^1(s)+\gamma^1(s) \geq r^1(i,g^0,s)+ 
\sum_{s'\in S}q(s'|s,i,g^0)\gamma^1(s').$$
If possible let us assume, we have strict inequality for some pure action $j_0$ of player-2. That is-
$$v_{f^0,g^0}^2(s)+\gamma^2(s) >  r^2(f^0,j_0,s)+  \sum_{s'\in S} q(s'|s,f^0,j_0)\gamma^2(s').$$
Therefore,
\[v_{f^0,g^0}^2(s)+\gamma^2(s)=v_{f^0,g^0}^2(s)\sum_j g^0_j(s)+\gamma^2(s)\sum_j g^0_j(s)\]
\[=\sum_j g^0_j(s) [v_{f^0,g^0}^2(s)+\gamma^2(s)] > \sum_j g^0_j(s)[r^2(f^0,j,s) +  \sum_{s'\in S} q(s'|s,f^0,j) \gamma^2(s')]\]
\[ = \sum_j r^2(f^0,j,s)g^0_j(s) + \sum_{s'\in S} \sum_j q(s'|s,f^0,j)g^0_j(s) \gamma^2(s')\]
\[=  r^2(f^0,g^0,s)+  \sum_{s'\in S} q(s'|s,f^0,g^0)\gamma^2(s') = v_{f^0,g^0}^2(s) +\gamma^2(s),\]
which is a contradiction. Hence equality holds for all pure stationary strategy $j$ for player-2. Exactly in the similar way we can show equality for pure action of player-1 also.
 \end{proof}
Using \ref{lemma7} we can show that for an undiscounted stochastic game with one completely mixed NE the game process equalizer rule for undiscounted payoff. Unlike the zerosum case (see Theorem \ref{theorem5}) we have equalizer for payoff for both players. 
\begin{theorem}\label{theorem7}
Let $\Gamma$ be a non-zerosum undiscounted stochastic game. NOT necessarily single player controlled transition. Also assume that, the undiscounted stochastic game is completely mixed. Let $(f^0,g^0)$ be a NE in the game. Then we have the following equalizer rule, 
$$\Phi^{(2)}(f^0, g^0)=\Phi^{(2)}(f^0, g)$$
$$\Phi^{(1)}(f^0, g^0)=\Phi^{(1)}(f, g^0)$$ for all  stationary strategy $f\in \mathbb{P}_{1}$ and  stationary strategy $g\in \mathbb{P}_{2}$.
\end{theorem}
\begin{proof}
Using Remark \ref{remark1} we have $(f^0,g^0)_c$ is $(f^0,g^0)$ restricted to states $s\in C_c$ is a NE in the restricted game $\Gamma_c$. Since the states $C_c$ are irreducible, the value for both player-1 and player-2 corresponding to NE $(f^0,g^0)_c)$ is independent of states (say $v^1_c$ and $v_c^2$ respectively). Therefore there exists vector $\gamma^2$ such that from Lemma \ref{lemma7}:
$$v_c^2+ \gamma^2(s)= r_c^2(f^0,j,s)+  \sum_{s'\in S} q(s'|s,f^0,j)\gamma^2(s')$$
where, $j$ is any pure strategy of player-2's and $s\in C_c$. Now for any stationary strategy $g$ of player-2 we have the following equality.
$$ v_c^2 g_j(s)+ \gamma^2(s)g_j(s)= r_c^2(f^0,j,s)g_j(s)+  \sum_{s'\in S} q(s'|s,f^0,j)g_j(s)\gamma^2(s')$$
where, $j$ is any pure strategy of player-2's and $s\in C_c$. This implies,
$$ \sum_{j} v_c^2g_j(s)+ \sum_{j}\gamma^2(s)g_j(s)=\sum_{j} r_c^2(f^0,j,s)g_j(s)+ \sum_{j} \sum_{s'\in S} q(s'|s,f^0,j)g_j(s)\gamma^2(s')$$
for all $s\in C_c$. Then it follows:
$$\text{for all }s\in C_c; \qquad v_c^2+ \gamma^2(s)= r_c^2(f^0,g,s)+  \sum_{s' \in S} q(s'|s,f^0,g)\gamma^2(s').$$
This then implies,
$$ v_c^2\textbf{1}+ \gamma^2= r_c^2(f^0,g)+  Q(f^0,g)\gamma^2.$$
Multiplying be the Markov matrix $Q^*(f^0,g)$ in both the sides of the above equality we obtain:
$$ v_c^2\textbf{1}+ Q^*(f^0,g)\gamma^2= \Phi_c^2(f^0,g)+  Q^*(f^0,g)\gamma^2.$$
Therefore we indeed have:
$$\Phi_c^2(f^0,g^0) =\Phi_c^2(f^0,g).$$
\\ The above argument is true for all the individual completely mixed games $\Gamma_1,\Gamma_2,\cdots\Gamma_k$. Define $p_g(s,s')$ be the probability that starting in state $s\in H$, the first state reached outside $H$ is $s'$. So we have,
$$\Phi^2(f^0,g)(s)= \sum_{s\in H^c} p_g(s,s') \Phi^2(f^0,g)(s')  ,$$
where, $s\in H$. We already have $\Phi_c^2(f^0,g) =\Phi_c^2(f^0,g^0)$ for $s\in H^c$.
\\ For other side of the equality is similar with the proof for player-1. Using Lemma \ref{lemma7} we can derive the following for each $\Gamma_c$. 

$$v_c^1(s)+\gamma^1(s) =  r^1(f,g^0,s)+  \sum_{s' \in S} q(s'|s,f,g^0)\gamma^1(s')$$
for all $f\in \mathbb{P}_1$. Now multiplying the Markov matrix $Q^*(f,g^0)$ on both the sides we have the desired equality.
 \end{proof}

\begin{theorem}\label{theorem8}
Let $\Gamma$ be a non-zerosum player-2 controlled undiscounted stochastic game. Let $(f^0,g^0)$ be a NE with $g^0$ as an uniformly discount equilibrium. Then

$$v^1_{f^0,g^0}(s) = \sum_{s'}q(s'|s,g^0) v^1_{f^0,g^0} (s')$$
$$v^2_{f^0,g^0}(s) = \sum_{s'}q(s'|s,g^0) v^2_{f^0,g^0} (s')$$

\end{theorem}
\begin{proof}
The proof is easy. So we are skipping the proof.
 \end{proof}


\begin{remark}\label{remark3}

For the undiscounted player-2 controlled non-zerosum stochastic $\Gamma$ game the existence of uniform discount equilibrium $(f^0,g^0)$ is shown by \cite{tp1981}. Also it is shown that $(f^{\beta},g^0)$ with $f^0(s)= \lim_{\beta \uparrow 1}f^{\beta}(s)$
 is a NE in the $\beta$-discounted stochastic game $\Gamma_{\beta}$ for all $\beta > \beta_0$. If the Undiscounted non-zerosum stochastic game is completely mixed then the uniformly discount equilibrium $(f^0,g^0)$ is completely mixed hence for $\beta$ closed to $1$\, $(f^{\beta},g^0)$ is completely mixed NE in $\Gamma_{\beta}$. Now from Theorem \ref{theorem6} we have $(f^{\beta}(s),g^0(s))$ is a NE in the bimatrix game $(B_{\beta}^1(s),B_{\beta}^2(s))$. So we have $B_{\beta}^1(s)g^0(s)=c\textbf{e}$. Therefore from Lemma 4.1 of \cite{tp1981} we have $R^1(s)g^0(s)=c_1 \textbf{e}$. It is also mentioned in \cite{tp1981} that for completely mixed games $R^1(s)$ and $R^2(s)$ are square matrix.
\end{remark}

\begin{proposition}
Let $\Gamma$ be non-zerosum player-2 controlled undiscounted completely mixed stochastic game. Then the payoff matrices $R^2(s)$ for all $s\in S$ are non-singular.
\end{proposition}
\begin{proof}

In \cite{tp1981} they showed existence of an uniform discount NE $(f^0,g^0)$ in the undiscounted $\Gamma$
such that $f^0(s)=\lim_{\beta \uparrow 1}
f^{\beta}(s)$ for all $s \in S$ and $(f^{\beta},g^0)$
is NE in the discounted game $\Gamma_{\beta}$ for all $\beta>\beta_0$. 
Without loss of generality assume $r^k(i,j,s)>0$ for all $k=\{1,2\}$ and for all $s\in S$. Now since $\Gamma$ is completely mixed $(f^0,g^0)$ is also completely mixed. Hence  $(f^{\beta},g^0)$ is also completely mixed for all $\beta>\beta_1$.
If possible let us assume for some $s_0, R^2(s_0) $ is singular. we already have from Theorem \ref{theorem6}-
\begin{center}
    $B^1_{\beta}(s_0)g^0(s_0)=v^1_{f^{\beta},g^0}\textbf{1}$ ,\\
    $f^{\beta}(s_0)B^2_{\beta}(s_0)=v^2_{f^{\beta},g^0}\textbf{1}^t$
\end{center}
Using Lemma 4.1; \cite{tp1981} we can conclude $B^2_{\beta}(s_0)$ is also singular. Hence have some $f_*^{\beta}\in \mathbb{P}_2$ with $f_*^{\beta}(s_0)B^2_{\beta}(s_0)=v^2_{f^{\beta},g^0}\textbf{1}^t$. $(f_*^{\beta},g^0)$ is another NE in $\Gamma_{\beta}$. From Lemma 5.2 \cite{tp1981} we have $S(g^0)=\{f|(f,g^0)$ is NE in $\Gamma_{\beta}\} $ is convex. Hence we can choose $\lambda$ such that $(\lambda f_*^{\beta} +(1-\lambda)f^{\beta},g^0)$ is a non-completely mixed equilibrium. Now a sub sequence of $(1+\lambda) f_*^{\beta} +\lambda f^{\beta}$ will converge to some $\bar{f}$ such that $(\bar{f},g^0)$ will be a non-completely mixed NE (from Lemma \ref{lemma6}) in $\Gamma$. Which is a contradiction and hence the proposition follows.
 \end{proof}

\begin{proposition}\label{prop2}
Let $\Gamma$ be non-zerosum player-2 controlled undiscounted completely mixed stochastic game. If $R^1(s)$ is non-singular for all $s\in S$. Then $T=\{g|(f,g)$ is a NE in $\Gamma$ for some $f 
\in \mathbb{P}_1 \}$ is singleton.
\end{proposition}
\begin{proof} The proof is on the 
same way of filar's argument(\cite{filr1985} proposition 3.4) for zero-sum games. In 
\cite{tp1981} (page 390) provide the existence of 
uniform discount equilibrium in $\Gamma$. 
Let $(f^0,g^0)$ be NE in $\Gamma$ with $g^0$ uniform discount equilibrium. Let we have some other NE $(f^*,g^*)$
(Note: $f^*$ may be equal with $f^0$). From remark 1 the sets $C_1,\cdots,C_k$ and $H$ are same for $g^0$ 
and $g^*$. Also for all $s\in H$ the number of action available in $s$ is exactly $1$. So $g^0(s)=g^*(s)$ for all $s\in H$. Now it is enough to consider the 
subgames $\Gamma_1, \cdots ,\Gamma_k$ separately. Without loss of generality we consider only $\Gamma_1$ and assume
that $C_1$ has $S_1$ states. Also, associate with a 
NE $(f,g)$, the value $v^k = v^k(s)$ is a constant for all $s \in C_1$ and $k\in \{1,2\}$. The 
stationary matrices $Q^*(g^0)$ and $Q^*(g^*)$ each have identical rows $u^0 =
(u^0(l),\cdots, u^0(S_1))$ and $u^* = (u^*(1),\cdots, u^*(S_1))$ with $u^0(s)$ and $u^*(s) > 0$ for all
$s \in C_1$. By theorem 8, for every pure stationary strategy a for player-1 in $\Gamma_1$ we
have, for every $s \in C_1$,
$$v^1_{(f^0,g^0)}=\Phi^1(\sigma, g^0)(s) = [Q^*(g^0)r^1(\sigma, g) ]_s$$
$$=\sum_{s'=1}^{S_1} u^0(s')[\sigma(s')R^1(s') g^0(s')]$$
\begin{equation}
=\sum_{s' =1}^{ S_1} \sum_{j=1}^{n_{s'}}
 r^1(\sigma, j,s')u_j^0(s')
\end{equation}

where $u_j^0(s') = u^0(s')g^0_j(s')$ for all $j = 1,\cdots, n_s$, and $s' \in C_1$. The above equality holds
with $g^*$ in place of $g^0$ and with $u^*(s') = u^*(s')g^*(s')$ in place of $u^0(s')$. Now let
$z_j(s) = u^0_j(s) - u^*_j(s)$ for all $j = 1,\cdots, n_s$ and $s \in C_1$. Then from above equation we obtain
$$\sum_{s' =1}^{ S_1} \sum_{j=1}^{n_{s'}}
r^1(\sigma, j,s')z_j(s') =v^1_{(f^0,g^0)}-v^1_{(f^*,g^*)}=\bar{c} $$
for every pure stationary strategy $\sigma$ for I in $\Gamma_1$. Let
$$ Z =(Z_l:Z_2:\cdots:Z_{S_1})^t$$
be a column vector such that $Z_s = (Z_1(s), Z_2(s), \cdots , Z_n(S))$ for each $s \in C_1$, and let
$t_1$, be the number of pure stationary strategies for player-1 in $\Gamma_1$. Fix $\sigma(s)$ for each $s > 1$
and consider the $n_1$ equations extracted from (1) by letting $\sigma(1)$ range over the $n_1$-dimensional unit basis vectors (Since $R^1(s)$ is square from remark 3). These are equivalent to $$R^1(1)Z_l = \alpha\textbf{1}.$$ Now, if $\alpha = 0$ the argument follows exactly in the same way of filar's argument for zero sum games with help of Assumption. 
If $\alpha \neq 0$ the argument goes exactly in the same way of Filar's argument with the help of Remark 3.
  \end{proof}

\begin{proposition}\label{prop3}
Let $\Gamma$ be non-zerosum player-2 controlled undiscounted completely mixed stochastic game. Then given a NE $(f^*,g^*)$, $S(g^*)=\{f|(f,g^*)$ is a NE $\}$ is singleton.
\end{proposition}
\begin{proof}
From lemma 5.2 of \cite{tp1981} we already have $S(g^*)$ is convex. Since we have $\Gamma$ is completely mixed
all elements of $S(g^*)$ has to be completely mixed. Assume we have $f^0,f^* \in S(g^*)$. Therefore we can choose $\lambda$ such the $\lambda f^0(s)+(1-\lambda)f^*(s)$ is not completely mixed for some $s\in S$. Which is a contradiction. Hence the proposition is true.
 \end{proof}
For non-zerosum stochastic games completely mixed game does not imply unique Nash Equilibrium. But under some nonsingularity assumption of the payoff matrix the undiscounted game process unique Nash equiluibrium. 
\begin{theorem}\label{theorem9}
The two person non-zerosum player-2 controlled undiscounted stochastic game $\Gamma$ is completely mixed. If $R^1(s)$ is nonsingular for all $s\in S$. Then the game process an unique Nash Equilibrium. 
\end{theorem}

\begin{proof}
Let $(f^0,g^0)$ and $(f^*,g^*)$ be two NE is $\Gamma$. Then from Theorem \ref{theorem8} we have $(f^0,g^*)$ and $(f^*,g^0)$ are also NE of the stochastic game $\Gamma$. Which contradicts the Proposition \ref{prop2} and \ref{prop3}. Hence, $\Gamma$
process an unique NE. 
 
 \end{proof}

\begin{theorem}
Let $\Gamma_{\beta}$ be a non-zerosum $\beta$-discounted stochastic game. The game is \textit{not} necessarily single player controlled. Also assume that the $\beta$-discounted stochastic game has a completely mixed NE $(f^0,g^0)$. Then we have the following equalizer rules, 
$$ I_{\beta}^{(2)}(f^0, g^0)=I_{\beta}^{(2)}(f^0, g)\quad \text{and},$$
$$ I_{\beta}^{(1)}(f^0, g^0)= I_{\beta}^{(1)}(f, g^0)$$ for all stationary strategy $f\in \mathbb{P}_{A_1}$ and stationary strategy $g\in \mathbb{P}_{A_2}$.
\end{theorem}
\begin{proof}

From markov decision process we have the following results-
$$v_{\beta}^1(s) = \max_{\mu \in \mathbb{P}_1}\Bigg[ r^1(\mu,g^0,s)+ \beta \sum_{s' \in S} q(s'|s,\mu,g^0)v_{\beta}^1(s')\bigg]$$
$$v_{\beta}^2(s) = \max_{\sigma\in \mathbb{P}_2}\Bigg[ r^2(f^0,\sigma,s)+ \beta \sum_{s' \in S} q(s'|s,f^0,\sigma)v_{\beta}^2(s')\Bigg]$$

Now using the similar technique as used in Lemma \ref{lemma7} we can show that equality holds for all pure strategy (hence for all strategies) of player-1 and player-2.
Therefore we have 
$$v_{\beta}^1(s) =  r^1(f,g^0,s)+ \beta \sum_{s' \in S} q(s'|s,f,g^0)v_{\beta}^1(s')$$
$$v_{\beta}^2(s) = r^2(f^0,g,s)+ \beta \sum_{s' \in S} q(s'|s,f^0,g)v^2_{\beta}(s')$$
for all $f\in \mathbb{P}_1$ and $g \in \mathbb{P}_2$.
Writing the last equation in a vector from we  have
$$v_{\beta}^2 = r^2(f^0,g)+ \beta Q(f^0,g)v_{\beta}^2$$
Now pre-multiplying the equation by $[I-\beta Q(f^0,g)]^{-1}$ we get-
$$[I-\beta Q(f^0,g)]^{-1}v_{\beta}^2 = [I-\beta Q(f^0,g)]^{-1}r^2(f^0,g)+ \beta [I-\beta Q(f^0,g)]^{-1} Q(f^0,g)v_{\beta}^2.$$
This implies,
$$[I-\beta Q(f^0,g)]^{-1}v_{\beta}^2 = [I-\beta Q(f^0,g)]^{-1}r^2(f^0,g)+ \beta [\sum_{k=0}^{\infty}\beta^kQ^k(f^0,g)] Q(f^0,g)v_{\beta}^2.$$
$$\implies[I-\beta Q(f^0,g)]^{-1}v_{\beta}^2 = [I-\beta Q(f^0,g)]^{-1}r^2(f^0,g)+  [\sum_{k=1}^{\infty}\beta^k Q^k(f^0,g)]v_{\beta}^2.$$
$$\implies[I-\beta Q(f^0,g)]^{-1}v_{\beta}^2 =I_{\beta}^2(f^0,g)+  [-I+\sum_{k=0}^{\infty}\beta^k Q^k(f^0,g)]v_{\beta}^2.$$
So finally we have for all 
$$\implies v_{\beta}^2=I_{\beta}^2(f^0,g)$$
for all $g \in \mathbb{P}_2$.
Similarly we can show  $I_{\beta}^{(1)}(f^0, g^0)= I_{\beta}^{(1)}(f, g^0)$ for all $f\in \mathbb{P}_{1}$. 
 \end{proof}

\section{Open Problem:} 
In a switching control Stochastic game (\cite{filr1981}) we get a partition $S_1$ and $S_2$ of state space $S$, such that in any states of $S_1$ player-1 alone controls the transition probability and in any states of $S_2$ player-2 alone controls the transition probability. The transition probabilities are as follows.
\begin{center}
$q(s'|s,i,j)=q(s'|s,i)$ for all $s\in S, s'\in S_1 i\in A_1$ and $j\in A_2.$\\
$q(s'|s,i,j)=q(s'|s,j)$ for all $s\in S, s'\in S_2 i\in A_1$ and $j\in A_2.$

\end{center}

It is not known whether similar results of Theorem \ref{main.theorem} and Theorem \ref{theorem2} holds for Switching control undiscounted stochastic games. 

\section{Acknowledgement}
We would like to thanks the anonymous referee for there valuable and delicate commands that helped to structure the paper better.

\bibliographystyle{siam}\bibliography{ref} 

\begin{thebibliography}{10}

\bibitem{sujatha2016}
{\sc S.~Babu, N.~Krishnamurthy, and T.~Parthasarathy}, {\em Stationary,
  completely mixed and symmetric optimal and equilibrium strategies in
  stochastic games}, International Journal of Game Theory,  (2016), pp.~1--22.

\bibitem{black1968}
{\sc D.~Blackwell and T.~S. Ferguson}, {\em The big match}, The Annals of
  Mathematical Statistics, Vol. 39 (1968), pp.~159--163.

\bibitem{fillerbook1996}
{\sc J.~Filar and K.~Vrieze}, {\em Competitive Markov Decision Processes},
  Springer, 1996.

\bibitem{filr1981}
{\sc J.~A. Filar}, {\em Ordered field property for stochastic games when the
  player who controls transitions changes from state to state}, Journal of
  Optimization Theory and Applications, Vol. 34 (1981), pp.~501--515.

\bibitem{filr1985}
\leavevmode\vrule height 2pt depth -1.6pt width 23pt, {\em The completely mixed
  single-controller stochastic game}, Proceedings of the American Mathematical
  Society, Vol. 95 (1985), pp.~585--594.

\bibitem{rag1984}
{\sc J.~A. Filar and T.~E.~S. Raghavan}, {\em A matrix game solution to single
  controller stochastic game.}, Mathematics of Operations Research, Vol. 9
  (1984), pp.~356 -- 362.

\bibitem{fink64}
{\sc A.~M. Fink}, {\em “equilibrium in a stochastic n-person game}, J.
  Science of Hiroshima University, Vol. 28 (1964), pp.~89--93.

\bibitem{gillet}
{\sc D.~Gillette}, {\em Stochastic games with zero stop probabilities},
  Princeton University Press,  (1957).

\bibitem{Hordijk1979}
{\sc A.~Hordijk and L.~C.~M. Kallenberg}, {\em Linear programming and markov
  decision chains}, Management Science, Vol. 25 (1979), pp.~352 -- 362.

\bibitem{kapl1945}
{\sc I.~Kaplansky}, {\em A contribution to von neumann's theory of games},
  Annals of Mathematics Second Series, Vol. 46 (1945), pp.~474--479.

\bibitem{nowak93}
{\sc A.~S. Nowak and T.~E.~S. Raghavan}, {\em A finite step algorithm via
  bimatrix games to a single controller nonzero-sum stochastic games},
  Mathematics Programming, Vol. 59 (1993), pp.~249--259.

\bibitem{tp1981}
{\sc T.~Parthasarathy and T.~E.~S. Raghavan}, {\em An orderfield property for
  stochastic games when one player controls transition probabilities}, Journal
  of Optimization Theory and Applications, VOL. 33 (1981), p.~375–392.

\bibitem{Rogers1969}
{\sc P.~D. Rogers}, {\em Nonzero-sum stochastic games}, PhD thesis, University
  of California, Berkeley. Report ORC 69-8.,  (1969).

\bibitem{shaply53}
{\sc L.~S. Shapley}, {\em Stochastic games}, Proceedings of the National
  Academy of Sciences of the United States of America, VOL. 39 (1953),
  pp.~1095--1100.

\bibitem{Sobel1971}
{\sc M.~Sobel}, {\em Noncooperative stochastic games}, Annals Mathematical
  Statistics, Vol. 42 (1971), pp.~1930--1935.

\bibitem{takahashi64}
{\sc M.~Takahashi}, {\em Equilibrium points of stochastic noncooperative
  n-person games}, J. Science of Hiroshima University, Vol. 28 (1964),
  pp.~95--99.

\end{thebibliography}

\end{document}